\definecolor{uuuuuu}{rgb}{0.26666666666666666,0.26666666666666666,0.26666666666666666}
\definecolor{xdxdff}{rgb}{0.49019607843137253,0.49019607843137253,1.}
\definecolor{ffqqqq}{rgb}{1.,0.,0.}
\definecolor{uuuuuu}{rgb}{0.26666666666666666,0.26666666666666666,0.26666666666666666}
\definecolor{qqwuqq}{rgb}{0.,0.39215686274509803,0.}
\definecolor{zzttqq}{rgb}{0.6,0.2,0.}
\definecolor{xdxdff}{rgb}{0.49019607843137253,0.49019607843137253,1.}
\definecolor{qqqqff}{rgb}{0.,0.,1.}
\definecolor{cqcqcq}{rgb}{0.7529411764705882,0.7529411764705882,0.7529411764705882}
\definecolor{uuuuuu}{rgb}{0.26666666666666666,0.26666666666666666,0.26666666666666666}
\definecolor{qqwuqq}{rgb}{0.,0.39215686274509803,0.}
\definecolor{zzttqq}{rgb}{0.6,0.2,0.}
\definecolor{xdxdff}{rgb}{0.49019607843137253,0.49019607843137253,1.}
\definecolor{qqqqff}{rgb}{0.,0.,1.}
\definecolor{cqcqcq}{rgb}{0.7529411764705882,0.7529411764705882,0.7529411764705882}
\theoremstyle{plain}
\newtheorem{theorem}[subsection]{Theorem}
\newtheorem{lemma}[subsection]{Lemma}
\newtheorem{prop}[subsection]{Proposition}
\newtheorem{defi}[subsection]{Definition}
\theoremstyle{definition}
\newtheorem{cor}[subsection]{Corollary}
\newtheorem{remark}[subsection]{Remark}
\newcommand{\uu}{\cup}
\newcommand{\ii}{\cap}
\newcommand{\UU}{\bigcup}
\newcommand{\ci}{\subseteq}
\newcommand{\sci}{\subset}
\newcommand{\es}{\emptyset}
\newcommand{\set}[1]{\{#1\}}
\newcommand{\ga}{\alpha}
\newcommand{\gb}{\beta}
\newcommand{\gd}{\delta}
\renewcommand{\gg}{\gamma}
\newcommand{\gk}{\kappa}
\newcommand{\go}{\omega}
\newcommand{\gs}{\sigma}
\newcommand{\tit}{\textit}
\newcommand{\D}[1]{\mathbb{#1}}
\newcommand{\te}{\text}
\begin{document}

To appear, Qualitative Theory of Dynamical Systems
\title{Optimal quantization for some triadic uniform Cantor distributions with exact bounds}
\author{Mrinal Kanti Roychowdhury}
\address{School of Mathematical and Statistical Sciences\\
University of Texas Rio Grande Valley\\
1201 West University Drive\\
Edinburg, TX 78539-2999, USA.}
\email{mrinal.roychowdhury@utrgv.edu}

\subjclass[2010]{60Exx, 28A80, 94A34.}
\keywords{Cantor set, probability distribution, optimal sets, quantization error, centroidal Voronoi tessellation}
\thanks{ }

\date{}
\maketitle

\pagestyle{myheadings}\markboth{Mrinal Kanti Roychowdhury}{Optimal quantization for some triadic uniform Cantor distributions with exact bounds}

\begin{abstract}   Let $\{S_j : 1\leq j\leq 3\}$ be a set of three contractive similarity mappings such that $S_j(x)=rx+\frac {j-1}{2}(1-r)$ for all $x\in \mathbb R$, and $1\leq j\leq 3$, where $0<r<\frac 1 3$. Let  $P=\sum_{j=1}^3 \frac 13 P\circ S_j^{-1}$. Then, $P$ is a unique Borel probability measure on $\mathbb R$ such that $P$ has support the Cantor set generated by the similarity mappings $S_j$ for $1\leq j\leq 3$. Let $r_0=0.1622776602$, and $r_1=0.2317626315$ (which are ten digit rational approximations of two real numbers).  In this paper, for $0<r\leq r_0$, we give a general formula to determine the optimal sets of $n$-means and the $n$th quantization errors for the triadic uniform Cantor distribution $P$ for all positive integers $n\geq 2$. Previously, Roychowdhury gave an exact formula to determine the optimal sets of $n$-means and the $n$th quantization errors for the standard triadic Cantor distribution, i.e., when $r=\frac 15$. In this paper, we further show that $r=r_0$ is the greatest lower bound, and $r=r_1$ is the least upper bound of the range of $r$-values to which Roychowdhury formula extends. In addition, we show that for $0<r\leq r_1$ the quantization coefficient does not exist though the quantization dimension exists.
\end{abstract}

\section{Introduction}

Let $P$ be a Borel probability measure on $\D R^d$, where $d\geq 1$. For a finite set $\ga \sci \D R^d$, write
\[V(P; \ga)= \int \min_{a\in\alpha} \|x-a\|^2 dP(x), \te{ and } V_n:=V_n(P)=\inf \Big\{V(P; \ga) : \ga \subset \mathbb R^d, \text{ card}(\ga) \leq n \Big\},\]
where $\|\cdot\|$ represents the Euclidean norm on $\D R^d$. Then, $V(P;\ga)$ is called the \tit{cost} or \tit{distortion error} for $P$ with respect to the set $\ga$, and $V_n$ is called the $n$th quantization error for $P$ with respect to the squared Euclidean distance. A set $\ga\sci \D R^d$ is called an \tit{optimal set of $n$-means} for $P$ if $V_n(P)=V(P; \ga)$. It is well-known that for a continuous Borel probability measure an optimal set of $n$-means contains exactly $n$-elements (see \cite{GL1}). To see some work in the direction of optimal sets of $n$-means, one is referred to \cite{DR, GL2, RR}. For theoretical results in quantization we refer to
\cite{GL1, GL3, GL4, GL5, P}, and for its promising application see \cite{P1, P2}.
For a finite set $\ga \sci \D R^d$ and $a\in \ga$, by $M(a|\ga)$ we denote the set of all elements in $\D R^d$ which are nearest to $a$ among all the elements in $\ga$, i.e.,
\[M(a|\ga)=\set{x \in \D R^d : \|x-a\|=\min_{b \in \ga}\|x-b\|}.\]
$M(a|\ga)$ is called the \tit{Voronoi region} generated by $a\in \ga$. On the other hand, the set $\set{M(a|\ga) : a \in \ga}$ is called the \tit{Voronoi diagram} or \tit{Voronoi tessellation} of $\D R^d$ with respect to the set $\ga$.
\begin{defi} \label{defi000}
A set $\ga\sci \D R^d$ is called a \tit{centroidal Voronoi tessellation} (CVT) with respect to a probability distribution $P$ on $\D R^d$,  if it satisfies the following two conditions:

$(i)$ $P(M(a|\ga)\ii M(b|\ga))=0$ for $a, b\in \ga$, and $a \neq b$;

$(ii)$ $E(X : X \in M(a|\ga))=a$ for all $a\in \ga$,

where $X$ is a random variable with distribution $P$, and $E(X : X \in M(a|\ga))$ represents the conditional expectation of the random variable $X$ given that $X$ takes values in $M(a|\ga)$.
\end{defi}
A Borel measurable partition $\set{A_a : a\in \ga}$ is called a \tit{Voronoi partition} of $\D R^d$ with respect to the probability distribution $P$, if $P$-almost surely  $A_a\sci M(a|\ga)$ for all $a\in \ga$.
Let us now state the following proposition (see \cite{GG, GL1}).

\begin{prop} \label{prop0}
Let $\ga$ be an optimal set of $n$-means, $a \in \alpha$, and $M(a|\ga)$ be the Voronoi region generated by $a\in \ga$, i.e.,
$M(a|\ga)=\{x \in \mathbb R^d : \|x-a\|=\min_{b \in \alpha} \|x-b\|\}.$
Then, for every $a \in\alpha$,
$(i)$ $P(M(a|\ga))>0$, $(ii)$ $ P(\partial M(a|\ga))=0$, $(iii)$ $a=E(X : X \in M(a|\ga))$.
\end{prop}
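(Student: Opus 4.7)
The plan is to verify the three assertions in order, using only the optimality of $\alpha$ together with two structural properties of $P$: its support (the Cantor set) is uncountable, so that $V_n$ is strictly decreasing in $n$; and $P$ is non-atomic. For (i), I would argue by contradiction. Suppose $P(M(a|\alpha))=0$ for some $a\in\alpha$, and set $\alpha':=\alpha\setminus\{a\}$. Every $x\notin M(a|\alpha)$ satisfies $\min_{b\in\alpha'}\|x-b\|=\min_{b\in\alpha}\|x-b\|$, and since the exceptional set carries no mass we obtain $V_{n-1}\le V(P;\alpha')\le V(P;\alpha)=V_n$. This contradicts the strict monotonicity of $V_n$ in $n$, establishing (i).

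For (ii), observe that $\partial M(a|\alpha)$ is contained in the finite union of perpendicular bisector hyperplanes separating $a$ from the other points of $\alpha$. In the one-dimensional setting of this paper each such bisector reduces to a single point, so $\partial M(a|\alpha)$ is a finite set; since $P$ is non-atomic, it carries no mass.

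For (iii), I would invoke the parallel-axis identity: for any Borel set $A$ with $P(A)>0$ and any $c\in\mathbb R^d$,
\[
\int_A\|x-c\|^2\,dP(x)=\int_A\|x-\hat a\|^2\,dP(x)+P(A)\,\|c-\hat a\|^2,
\]
where $\hat a=\frac{1}{P(A)}\int_A x\,dP(x)=E(X: X\in A)$. Applied with $A=M(a|\alpha)$ and $c=a$, this shows that the contribution of the cell $M(a|\alpha)$ to the total distortion is strictly minimised by replacing $a$ with $\hat a$ unless $a=\hat a$. If $a\ne \hat a$, then $\alpha^{\#}:=(\alpha\setminus\{a\})\cup\{\hat a\}$ would yield, using the original Voronoi partition as an upper bound for the new distortion, $V(P;\alpha^{\#})<V(P;\alpha)=V_n$, contradicting the optimality of $\alpha$. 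Hence $a=E(X:X\in M(a|\alpha))$.

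The main obstacle among the three steps is really part (i): it relies on the strict inequality $V_n<V_{n-1}$, which itself encodes the fact that an optimal quantiser never places a centre in a $P$-null region and which requires the hypothesis that $\operatorname{supp}(P)$ is infinite. Parts (ii) and (iii) are then direct consequences of the non-atomicity of $P$ and the Hilbert-space variance decomposition, respectively, and do not involve the fine structure of the Cantor measure.
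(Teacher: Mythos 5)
The paper gives no proof of Proposition~\ref{prop0} at all: it is quoted from \cite{GG} and \cite{GL1}, so there is no in-paper argument to compare against. Your sketch reconstructs the standard Graf--Luschgy proof, and in the setting the paper actually uses (a non-atomic measure on $\mathbb{R}$) it is correct. Three remarks. For (i), your argument is the right one, but it leans on the strict inequality $V_{n-1}>V_n$, which is itself a nontrivial fact valid only when $\operatorname{card}(\operatorname{supp}(P))\ge n$; you should either cite it separately or observe that its proof (adjoining a support point to a near-optimal $(n-1)$-set strictly lowers the distortion) does not use (i), so there is no circularity. For (ii), your hyperplane argument proves the statement only for $d=1$: in $\mathbb{R}^d$ with $d\ge 2$ a non-atomic measure can charge a bisector hyperplane, and the general proof of $P(\partial M(a|\alpha))=0$ for optimal $\alpha$ is more delicate (it exploits the centroid condition applied to two different measurable Voronoi partitions that assign the shared boundary mass to $a$ and to $b$ respectively). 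Since the proposition is stated for $\mathbb{R}^d$ but only ever applied here to the Cantor measure on $\mathbb{R}$, your restriction is harmless for this paper, though it does not establish the statement as written. For (iii), the parallel-axis identity together with using the old Voronoi partition as an upper bound for the distortion of $(\alpha\setminus\{a\})\cup\{\hat a\}$ is exactly the standard argument; note that making the Voronoi regions into a genuine partition requires either (ii) or an arbitrary measurable tie-breaking, and that the strictness of the final inequality uses $P(M(a|\alpha))>0$ from (i), so the three parts must be proved in the order you chose.
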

The number
$ D(P):=\mathop{\lim}\limits_{n\to \infty}  \frac{2\log n}{-\log V_n(P)},$
if it exists, is called the \tit{quantization dimension} of the probability measure $P$. On the other hand, for $s\in (0, +\infty)$, the number $\mathop{\lim}\limits_{n\to \infty} n^{\frac 2 s} V_n(P)$, if it exists, is called the  $s$-dimensional \tit{quantization coefficient} for $P$. To know details about the quantization dimension and the quantization coefficient one is referred to \cite{GL1}.

Let $\set{S_j : 1 \leq j\leq 3}$ be a set of three contractive similarity mappings such that $S_j(x)=r x +\frac{j-1} {2}(1-r)$ for all $x\in \D R$, where $0<r<\frac 13$ and $1\leq j\leq 3$. For any positive integer $n$, if $\gs:=\gs_1\gs_2 \cdots\gs_n \in \{ 1, 2,3\}^n$, then we say that $\gs$ is a word of length $n$. By $\{1, 2, 3\}^*$, we denote the set of all words including the empty word $\es$. The empty word $\es$ has length zero. For $\gs:=\gs_1\gs_2\cdots \gs_n\in \{1, 2, 3\}^n$, by $S_\gs$ it is meant that $S_\gs:=S_{\gs_1}\circ \cdots \circ S_{\gs_n}$, and by $a(\gs)$, we mean $a(\gs):=S_\gs(\frac 12)$. For the empty word $\es$, by $S_\es$ it is meant the identity mapping on $\D R$. For $\gs:=\gs_1\gs_2 \cdots\gs_n \in\{1, 2, 3\}^n$, set $J_\gs:=S_{\gs}([0, 1])$. For the empty word $\es$, write $J:=J_\es=S_\es([0,1])=[0, 1]$. Then, the set $C:=\bigcap_{n\in \mathbb N} \bigcup_{\gs \in \{1, 2, 3\}^n} J_\gs$ is known as the \textit{Cantor set} generated by the mappings $S_j$, and equals the support of the probability measure $P$ given by $P=\sum_{j=1}^3 \frac 1 3 P\circ S_j^{-1}$.
 Notice that $C$ satisfies the invariance equality $C=\mathop{\uu}\limits_{j=1}^3 S_j(C)$  (see \cite{H}). In this paper a Cantor set $C$, which is generated by a set of three contractive similarity mappings, is called a \tit{triadic Cantor set}, and a probability measure $P$ which has support the triadic Cantor set, is called a \tit{triadic Cantor distribution}.
For words $\gb, \gg, \cdots, \gd$ in $\set{1, 2, 3}^\ast$, we write
\begin{equation*} \label{eq45} a(\gb, \gg, \cdots, \gd):=E(X|X\in J_\gb \uu J_\gg \uu \cdots \uu J_\gd)=\frac{1}{P(J_\gb\uu \cdots \uu J_\gd)}\int_{J_\gb\uu \cdots \uu J_\gd} x dP(x),
\end{equation*}
where $X$ is a random variable with probability distribution $P$, and $E(X)$ and $V:=V(X)$ represent the expectation and the variance of the random variable $X$.
Notice that for any $\go\in \set{1,2, 3}^\ast$, the similarity mapping $S_\go$ is an injective mapping on $\D R$; on the other hand,  for any discrete subset $A$ of $\D R$, the set $S_\go(A)$ represents the set of values obtained by applying $S_\go$ to each of the elements in $A$. Let us now give the following two definitions.

\begin{defi}\label{defi00} For $n\in \D N$ with $n\geq 3$ let $\ell(n)$ be the unique natural number with $3^{\ell(n)} \leq n< 3^{\ell(n)+1}$. Write
$\gb_2:=\set{a(1), a(2, 3)}$ and $\gb_3:=\set{a(1), a(2), a(3)}$. For $n\geq 3$, define $\gb_n:=\gb_n(I)$ as follows: \[\gb_n(I)=\left\{\begin{array}{cc}
\set{a(\go): \go \in \set{1, 2, 3}^{\ell(n)}\setminus I}\UU \mathop{\uu}\limits_{\go\in I} S_\go(\gb_2) & \te{ if } 3^{\ell(n)}\leq n\leq 2\cdot 3^{\ell(n)},\\
 \set{S_\go(\gb_2) : \go \in \set{1, 2, 3}^{\ell(n)}\setminus I}\UU \mathop{\uu}\limits_{\go\in I} S_\go(\gb_3) & \te{ if } 2\cdot 3^{\ell(n)}< n< 3^{\ell(n)+1},
\end{array}
\right.\] where $I \sci \set{1, 2, 3}^{\ell(n)}$ is arbitray with $\te{card}(I)=n-3^{\ell(n)}$ if $3^{\ell(n)}\leq n\leq 2\cdot3^{\ell(n)}$; and
$\te{card}(I)=n-2\cdot 3^{\ell(n)}$ if $2\cdot 3^{\ell(n)}< n< 3^{\ell(n)+1}$.
\end{defi}

\begin{defi}\label{defi23} For $n\in \D N$ with $n\geq 3$ let $\ell(n)$ be the unique natural number with $3^{\ell(n)} \leq n< 3^{\ell(n)+1}$. Write
$\gg_2:=\set{a(1, 21), a(22, 23, 3)}$ and $\gg_3:=\set{a(1), a(2), a(3)}$. For $n\geq 3$, define $\gg_n:=\gg_n(I)$ as follows: \[\gg_n(I)=\left\{\begin{array}{cc}
\set{a(\go) : \go \in \set{1, 2, 3}^{\ell(n)}\setminus I}\UU \mathop{\uu}\limits_{\go\in I} S_\go(\gg_2) & \te{ if } 3^{\ell(n)}\leq n\leq 2\cdot 3^{\ell(n)},\\
 \set{S_\go(\gg_2) : \go \in \set{1, 2, 3}^{\ell(n)}\setminus I}\UU \mathop{\uu}\limits_{\go\in I} S_\go(\gg_3) & \te{ if } 2\cdot 3^{\ell(n)}< n< 3^{\ell(n)+1},
\end{array}
\right.\] where $I \sci \set{1, 2, 3}^{\ell(n)}$ is arbitrary with $\te{card}(I)=n-3^{\ell(n)}$ if $3^{\ell(n)}\leq n\leq 2\cdot3^{\ell(n)}$; and
$\te{card}(I)=n-2\cdot 3^{\ell(n)}$ if $2\cdot 3^{\ell(n)}< n< 3^{\ell(n)+1}$.
\end{defi}
\begin{remark}
In the paper there are several decimal numbers, they are rational approximations of some real numbers up to ten decimal places.
\end{remark}
Roychowdhury showed that if $r=\frac 15$, then the sets $\gg_n$ given by Definition~\ref{defi00}, determine the optimal sets of $n$-means for all positive integers $n\geq 2$ (see \cite{R2}).
Proposition~\ref{prop004} implies that $\gg_n$ forms a CVT if $\frac{1}{79} \left(21-2 \sqrt{51}\right)\leq r\leq \frac{1}{41} \left(2 \sqrt{31}-1\right)$, i.e., if $0.08502712839\leq r\leq 0.2472080177$. Thus, we see that the range of $r$ values for which the sets $\gg_n$ form the optimal sets of $n$-means is bounded below by $\frac{1}{79} \left(21-2 \sqrt{51}\right)$, and bounded above by $\frac{1}{41} \left(2 \sqrt{31}-1\right)$. But, the greatest lower bound and the least upper bound of the range of $r$ values for which the sets $\gg_n$ form the optimal sets of $n$-means were not known. In this paper, in Theorem~\ref{Th3} we give an answer of it.
\begin{remark}
Notice that if $r=0$, then $S_1(x)=0$, $S_2(x)=\frac 12$, and $S_3(x)=1$ for all $x\in \D R$, and then the probability measure $P$ becomes a discrete uniform distribution with support $\set{0, \frac 12, 1}$. Because of that in our study we are assuming that the contractive ratios $r$ are positive.
\end{remark}

The arrangement of the paper is as follows: In Section~\ref{sec1}, we give the basic preliminaries. In Section~\ref{sec2}, we show that the sets $\gb_n$ form the optimal sets of $n$-means if $r=\frac 1{25}$.  In Section~\ref{sec3}, we prove the following theorem:
\begin{theorem} \label{Th2} Let $\gg_n:=\gg_n(I)$ be the set for arbitrary $I$ as defined by Definition~\ref{defi23}. Let $r_0, r_1\in (0, \frac 13)$ be the unique real numbers satisfying, respectively, the equations
\begin{align*} -\frac{3 r^5+15 r^4+6 r^3-42 r^2+31 r-13}{240 (r+1)}&=-\frac{3 r^3-3 r^2+r-1}{24 (r+1)},\\
-\frac{3 r^5+15 r^4+6 r^3-42 r^2+31 r-13}{240 (r+1)}&=-\frac{3 r^7+15 r^6+60 r^5+66 r^4+18 r^3-324 r^2+283 r-121}{2184 (r+1)}.
\end{align*}
Then, $r_0=0.1622776602$, and $r_1=0.2317626315$. Then, for all $n\geq 3$, the sets $\gg_n$ form the optimal sets of $n$-means for $r=r_0$ and $r=r_1$.
\end{theorem}
In Theorem~\ref{Th3}, we show that the sets $\gb_n$ form the optimal sets of $n$-means if $0<r\leq r_0$, and the sets $\gg_n$ form the optimal sets of $n$-means if $r_0\leq r\leq r_1$. Thus, Theorem~\ref{Th3} implies the fact that the greatest lower bound, and the least upper bound of $r$ for which the sets $\gg_n$ form the optimal sets of $n$-means are, respectively, given by $r=r_0$ and $r=r_1$. Notice that for $r=r_0$ both the sets $\gb_n$ and $\gg_n$ form the optimal sets of $n$-means for $P$. In addition, in Theorem~\ref{Th4}, we show that the quantization coefficient for $0<r\leq r_1$ does not exist though the quantization dimension exists.

\section{Preliminaries} \label{sec1}

As defined in the previous section, let $S_j$ for $1\leq j\leq 3$ be the contractive similarity mappings on $\D R$ given by $S_j(x)=r x +\frac{j-1} {2}(1-r)$ for all $x\in \D R$, and $1\leq j\leq 3$, where $0<r<\frac 13$.
For $\gs:=\gs_1\gs_2\cdots \gs_k\in \{1, 2, 3\}^k$ and
$\tau:=\tau_1\tau_2\cdots \tau_\ell\in \{1, 2, 3\}^\ell$, by
$\gs\tau:=\gs_1\cdots \gs_k\tau_1\cdots \tau_\ell$ we mean the word obtained from the
concatenation of the words $\gs$ and $\tau$. For $\gs=\gs_1\gs_2 \cdots\gs_n \in \{ 1, 2, 3\}^\ast$, $n\geq 0$, write
$p_\gs:=\frac 1{3^n}$ and $s_\gs:=\frac 1 {r^n}$. Recall that if $C$ is the Cantor set, then $C:=\bigcap_{n\in \mathbb N} \bigcup_{\gs \in \{1, 2, 3\}^n} J_\gs$. For $n\geq 1$, the intervals $J_\gs$, where $\gs\in \set{1, 2,3}^n$, are called the \tit{$n$th level basic intervals} of the Cantor set $C$.

The following two lemmas are well-known and easy to prove (see \cite{GL2, R2}).
\begin{lemma} \label{lemma1}
Let $f : \mathbb R \to \mathbb R^+$ be Borel measurable and $k\in \mathbb N$, and $P$ be the probability measure on $\D R$ given by $P=\sum_{j=1}^3 \frac 1 3P\circ S_j^{-1}$. Then,
\[\int f(x) dP(x)=\sum_{\sigma \in \{1, 2, 3\}^k} \frac 1 {3^k} \int f \circ S_\sigma(x) dP(x).\]
\end{lemma}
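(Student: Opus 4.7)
The plan is to prove this by induction on $k$, with the invariance identity $P=\sum_{j=1}^3 \frac 13 P\circ S_j^{-1}$ as the base case. The statement is the standard change-of-variables identity for a self-similar measure iterated $k$ times, and the structure is dictated by the semigroup property $S_\gs=S_{\gs_1}\circ S_{\gs_2\cdots\gs_k}$ applied inside the integral.

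For the base case $k=1$, I would start from the defining equation $P=\sum_{j=1}^{3}\frac13 P\circ S_j^{-1}$ and apply it to the nonnegative Borel function $f$. The change-of-variables formula for push-forward measures gives $\int f\, d(P\circ S_j^{-1}) = \int f\circ S_j\, dP$ for each $j$, so
\[
\int f(x)\,dP(x)=\sum_{j=1}^{3}\frac 13 \int f(x)\,d(P\circ S_j^{-1})(x)=\sum_{j=1}^{3}\frac 13\int f\circ S_j(x)\,dP(x),
\]
which is the case $k=1$. Nonnegativity of $f$ ensures the integrals are well-defined in $[0,\infty]$, and Tonelli's theorem justifies every interchange of sum and integral below.

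For the inductive step, assume the identity holds for some $k\geq 1$ and fix $\gs\in\{1,2,3\}^k$. Apply the base case with $f$ replaced by the Borel measurable nonnegative function $f\circ S_\gs$:
\[
\int f\circ S_\gs(x)\,dP(x)=\sum_{j=1}^{3}\frac 13\int f\circ S_\gs\circ S_j(x)\,dP(x)=\sum_{j=1}^{3}\frac 13\int f\circ S_{\gs j}(x)\,dP(x).
\]
Substituting this into the inductive hypothesis and reindexing the double sum over $\{1,2,3\}^k\times\{1,2,3\}$ as a sum over $\gt=\gs j \in \{1,2,3\}^{k+1}$ yields the identity at level $k+1$, completing the induction.

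There is really no obstacle here: the only point that needs a quick check is the measurability of $f\circ S_\gs$, which is immediate since each $S_j$ (and hence each $S_\gs$) is continuous and therefore Borel measurable, and the finite sums pose no convergence issues because $f\geq 0$.
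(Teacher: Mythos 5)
Your proof is correct, and it is exactly the standard induction on $k$ via the invariance equation and the change-of-variables formula for push-forward measures; the paper itself omits the proof as ``well-known,'' deferring to \cite{GL2, R5}, where this same argument appears. Nothing further is needed.
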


\begin{lemma} \label{lemma2} Let $X$ be a random variable with the probability distribution $P$. Then,
\[E(X)=\frac 12 \te{ and }  V:=V(X)=\frac{1-r}{6 (r+1)}, \te{ and } \int (x-x_0)^2 dP(x) =V (X) +(x_0-\frac 12)^2,\]
where $x_0\in \D R$.
\end{lemma}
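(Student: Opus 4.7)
The plan is to exploit the self-similarity relation $P=\sum_{j=1}^3\frac 13 P\circ S_j^{-1}$ via Lemma~\ref{lemma1}, which converts each integral against $P$ into a combination of integrals of the pushed-forward integrand. Applied with $f(x)=x$ and $f(x)=x^2$, this will give fixed-point equations that are linear in $E(X)$ and in $E(X^2)$, respectively, and the third identity is just the standard variance-shift formula.

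First, I would compute $E(X)$. Taking $k=1$ and $f(x)=x$ in Lemma~\ref{lemma1}, and inserting $S_j(x)=rx+\frac{j-1}{2}(1-r)$, the right-hand side collapses into the affine expression $r\,E(X)+\frac{1-r}{3}\sum_{j=1}^{3}\frac{j-1}{2}=r\,E(X)+\frac{1-r}{2}$. Solving the resulting linear equation $E(X)=r\,E(X)+\frac{1-r}{2}$ yields $E(X)=\frac 12$ immediately.

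Next, I would compute $V(X)=E(X^2)-\frac 14$ by the same device applied to $f(x)=x^2$. Expanding $S_j(x)^2=r^2x^2+2r\cdot\frac{j-1}{2}(1-r)\,x+\bigl(\frac{j-1}{2}(1-r)\bigr)^2$, substituting the known value $E(X)=\frac 12$, and collecting the constant contribution $\frac{1}{3}\sum_{j=1}^{3}\bigl(\frac{j-1}{2}(1-r)\bigr)^2=\frac{5(1-r)^2}{12}$, gives a linear equation in $E(X^2)$ whose coefficient on the unknown side is $1-r^2$. Solving and simplifying, the factor $1-r$ cancels against $(1-r^2)=(1-r)(1+r)$, leaving $V(X)=\frac{1-r}{6(1+r)}$.

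For the last identity, I would expand $(x-x_0)^2=(x-\tfrac 12)^2+2(x-\tfrac 12)(\tfrac 12-x_0)+(\tfrac 12-x_0)^2$ and integrate term by term against $P$; the cross-term vanishes because $\int(x-\tfrac 12)\,dP(x)=E(X)-\tfrac 12=0$, and the first term is $V(X)$ by definition. The only step that demands any care is the algebraic bookkeeping in the variance computation—summing the three intercepts $c_j=\frac{j-1}{2}(1-r)$ correctly and clearing the factor $1-r^2$—but this is routine and poses no real obstacle.
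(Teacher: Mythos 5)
Your proof is correct, and it follows exactly the standard self-similarity fixed-point argument (apply Lemma~\ref{lemma1} with $f(x)=x$ and $f(x)=x^2$ to get linear equations for $E(X)$ and $E(X^2)$, then use the variance-shift identity); the paper omits the proof entirely, citing \cite{GL2, R5} as well-known, and the argument in those references is the same as yours. All of your intermediate constants check out, including $\frac{1}{3}\sum_j c_j^2=\frac{5(1-r)^2}{12}$ and the cancellation of $1-r$ against $1-r^2$.
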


The following corollary is useful to obtain the distortion errors.
\begin{cor} \label{cor1}
Let $\gs \in \set{1, 2, 3}^k$ for $k\geq 1$, and $x_0 \in \mathbb R$. Then,
\begin{equation*} \label{eq234} \int_{J_\gs} (x-x_0)^2 dP(x) =\frac 1{3^k} \Big(r^{2k} V  +(S_\gs(\frac 12)-x_0)^2\Big).\end{equation*}
\end{cor}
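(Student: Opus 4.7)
The plan is to reduce the integral over $J_\gs$ to an integral of a simple quadratic against the full measure $P$, using the self-similarity of $P$ together with Lemma~\ref{lemma2}.

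First I would iterate the defining relation $P = \sum_j \frac{1}{3} P\circ S_j^{-1}$ to obtain $P = \sum_{\tau \in \{1,2,3\}^k} \frac{1}{3^k} P \circ S_\tau^{-1}$. Because $0 < r < \frac{1}{3}$, the cylinder sets $\{J_\tau : \tau \in \{1,2,3\}^k\}$ have pairwise intersections of $P$-measure zero, so only the $\tau = \gs$ term contributes to $\int_{J_\gs}(x-x_0)^2\,dP(x)$. Applying Lemma~\ref{lemma1} to $f(x) = (x-x_0)^2 \B{1}_{J_\gs}(x)$ (or equivalently, making the change of variable $y = S_\gs^{-1}(x)$) therefore yields
\[
\int_{J_\gs} (x-x_0)^2\, dP(x) \;=\; \frac{1}{3^k} \int (S_\gs(y) - x_0)^2\, dP(y).
\]

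Next I would use that $S_\gs$ is an affine similarity with contraction ratio $r^k$: writing $S_\gs(y) = r^k y + c_\gs$ and setting $z_0 := S_\gs^{-1}(x_0)$, we get $S_\gs(y) - x_0 = r^k(y - z_0)$. Hence
\[
\int (S_\gs(y)-x_0)^2\,dP(y) \;=\; r^{2k} \int (y - z_0)^2\, dP(y) \;=\; r^{2k}\!\left(V + \left(z_0 - \tfrac{1}{2}\right)^2\right),
\]
by Lemma~\ref{lemma2}. Finally, pulling $r^{2k}$ back inside the square gives $r^{2k}(z_0 - \tfrac12)^2 = (r^k z_0 + c_\gs - (c_\gs + r^k/2))^2 = (x_0 - S_\gs(\tfrac12))^2$, and combining with the $\frac{1}{3^k}$ prefactor produces the claimed expression.

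There is no real obstacle here; the argument is a straightforward application of Lemma~\ref{lemma1} and Lemma~\ref{lemma2}. The only step that requires a little care is the identification $r^{2k}(z_0 - \tfrac{1}{2})^2 = (S_\gs(\tfrac12)-x_0)^2$, which falls out once one remembers that $S_\gs$ has slope $r^k$ and sends $\tfrac{1}{2}$ to $c_\gs + r^k/2$.
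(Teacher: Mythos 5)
Your proposal is correct and follows exactly the route the paper intends: the paper states Corollary~\ref{cor1} without proof as an immediate consequence of Lemma~\ref{lemma1} (restricted to $f(x)=(x-x_0)^2\mathbf{1}_{J_\gs}(x)$, where only the $\tau=\gs$ term survives by disjointness of the level-$k$ cylinders) together with Lemma~\ref{lemma2} applied after the affine change of variables. Your identification $r^{2k}(z_0-\tfrac12)^2=(S_\gs(\tfrac12)-x_0)^2$ is the right bookkeeping step and the computation checks out.
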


\begin{proof}
By induction, $P=\frac 13 \sum_{j=1}^3 P\circ S_j^{-1}$ implies $P=\sum_{\gs\in\set{1, 2,3}^k} p_\gs P\circ S_\gs^{-1}$. Using this fact, Lemma~\ref{lemma1} and Lemma~\ref{lemma2}, the proof of the corollary follows.
\end{proof}

\begin{prop} \label{prop002}
Let $\gb_n(I)$ be the set given by Definition~\ref{defi00}. Then, $\gb_n(I)$ forms a CVT if $0<r\leq 2-\sqrt 3$, i.e., if $0<r\leq 0.2679491924$. Moreover, if  $3^{\ell(n)} \leq n\leq 2 \cdot 3^{\ell(n)}$, then
\[V(P, \gb_n(I))=\frac 1 {3^{\ell(n)}} \cdot r^{2\ell(n)} \Big ((2\cdot 3^{\ell(n)}-n) V+(n-3^{\ell(n)}) V(P; \gb_2)\Big),\]
and if $2\cdot 3^{\ell(n)} \leq n<3^{\ell(n)+1}$, then
\[V(P, \gb_n(I))=\frac 1 {3^{\ell(n)}} \cdot r^{2\ell(n)} \Big ((3^{\ell(n)+1}-n) V(P; \gb_2)+(n-2\cdot 3^{\ell(n)}) V(P; \gb_3)\Big).\]
\end{prop}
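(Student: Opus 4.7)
The plan is to verify the two defining properties of a centroidal Voronoi tessellation for $\gb_n(I)$, and then to derive the distortion formulas using Corollary~\ref{cor1} together with the self-similar invariance of $P$.

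\textbf{Structure of the Voronoi partition.} My first step is to check that the Voronoi partition induced by $\gb_n(I)$ has the expected self-similar shape: for each $\omega\in\{1,2,3\}^{\ell(n)}\setminus I$, the Voronoi region of $a(\omega)$ should contain $J_\omega\cap C$, while for each $\omega\in I$ the two points of $S_\omega(\gb_2)$ should partition $J_\omega\cap C$ into $S_\omega(J_1)\cap C$ and $S_\omega(J_2\cup J_3)\cap C$ (in the first case; similarly with $\gb_3$ in the second case). Since Voronoi cells in $\mathbb R$ are intervals, this reduces to checking where the midpoint between each pair of consecutive centroids lies. The two relevant kinds of midpoints are (a) the internal midpoint $S_\omega\bigl((r+3)/8\bigr)$ of $S_\omega(\gb_2)$ for $\omega\in I$, and (b) the midpoint between the rightmost centroid of one cluster and the leftmost centroid of its immediate right neighbor. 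For each such midpoint I would compute the formula explicitly from $a(1)=r/2$, $a(2,3)=(3-r)/4$, $a(\omega)=S_\omega(1/2)$, and verify that under $r\leq 2-\sqrt 3$ each midpoint lies in a gap of $C$ separating the intended Cantor pieces. Where the midpoint does not fall in the obvious first-level gap of $C$, I would trace its orbit through the inverse branches $S_j^{-1}$ until it is seen to land in a deeper-level gap.

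\textbf{Verifying the CVT conditions.} Once the self-similar shape of the partition is confirmed, condition $(i)$ is immediate, since the Voronoi boundary consists of finitely many points and $P$ is atomless. For $(ii)$, self-similarity does the work. For $\omega\notin I$, Lemma~\ref{lemma1} and Lemma~\ref{lemma2} give
\[
E(X\mid X\in J_\omega)=S_\omega\bigl(E(X)\bigr)=S_\omega(1/2)=a(\omega).
\]
For $\omega\in I$ and $b\in\gb_2$, the Voronoi cell of $S_\omega(b)$ inside $\gb_n(I)$ equals $S_\omega\bigl(M(b\mid\gb_2)\cap C\bigr)$ modulo $P$-null sets, so
\[
E\bigl(X\mid X\in M(S_\omega(b)\mid\gb_n(I))\bigr)
=S_\omega\bigl(E(X\mid X\in M(b\mid\gb_2))\bigr)=S_\omega(b),
\]
the final equality being exactly the statement that $\gb_2$ is a CVT on $C$ under the hypothesis $r\leq 2-\sqrt 3$.

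\textbf{The distortion formulas.} Splitting the cost integral by cylinders,
\[
V(P;\gb_n(I))=\sum_{\omega\notin I}\int_{J_\omega}(x-a(\omega))^2\,dP(x)+\sum_{\omega\in I}\int_{J_\omega}\min_{a\in S_\omega(\gb_2)}(x-a)^2\,dP(x).
\]
Each term in the first sum equals $\frac{r^{2\ell(n)}}{3^{\ell(n)}}V$ by direct application of Corollary~\ref{cor1} with $x_0=a(\omega)=S_\omega(1/2)$. For each term in the second sum, using the substitution $x=S_\omega(y)$ together with Lemma~\ref{lemma1} and the fact that $S_\omega$ is a similitude with ratio $r^{\ell(n)}$, one obtains $\frac{r^{2\ell(n)}}{3^{\ell(n)}}V(P;\gb_2)$. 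Summing and using $|I|=n-3^{\ell(n)}$ yields the stated formula. The case $2\cdot 3^{\ell(n)}<n<3^{\ell(n)+1}$ is entirely parallel: the cylinders $\omega\notin I$ contribute $\frac{r^{2\ell(n)}}{3^{\ell(n)}}V(P;\gb_2)$ each and the cylinders $\omega\in I$ contribute $\frac{r^{2\ell(n)}}{3^{\ell(n)}}V(P;\gb_3)$ each.

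\textbf{Main obstacle.} The sharp point of the argument is verifying the bound $r\leq 2-\sqrt 3$ for the midpoint-in-gap conditions: already the internal midpoint $(r+3)/8$ associated with $\gb_2$ need not lie in the first-level gap $[r,(1-r)/2]$ once $r$ is moderately large, and one must trace its orbit under the IFS inverse branches to locate a gap at deeper level. Pinning down exactly the range $0<r\leq 2-\sqrt 3$ for which this iterative analysis succeeds---and beyond which the orbit stays in $C$ and the centroidal condition breaks---is the technical heart of the proof; the rest of the proposition then follows essentially automatically from self-similarity.
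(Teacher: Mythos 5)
Your overall route is the paper's: by self-similarity, reduce the CVT claim to finitely many conditions asserting that the midpoint of each pair of adjacent generators lies in the gap of $C$ separating the two Cantor pieces whose conditional expectations those generators are, and then obtain the distortion formulas by splitting the integral over the level-$\ell(n)$ cylinders and applying Corollary~\ref{cor1} and Lemma~\ref{lemma1}. The distortion half of your argument is exactly the computation in the paper and is fine; the list of midpoints you propose to check (the internal midpoint of each $S_\go(\gb_2)$, plus the cross-cluster midpoints) corresponds to the four inequalities the paper writes down.

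The genuine problem is your proposed fallback: ``where the midpoint does not fall in the obvious first-level gap of $C$, I would trace its orbit through the inverse branches $S_j^{-1}$ until it is seen to land in a deeper-level gap.'' That step cannot establish condition $(ii)$. The generator $a(1)=E(X\mid X\in J_1)$ is the centroid of its Voronoi cell only if that cell meets $C$ in exactly $J_1\cap C$ up to $P$-null sets. If the midpoint $m=\frac 12(a(1)+a(2,3))=\frac{3+r}{8}$ lies to the right of $S_2(0)$ --- even if it sits inside a deeper-level gap of $S_2(C)$ --- then the cell of $a(1)$ absorbs the set $C\cap[S_2(0),m]$, which has positive $P$-measure since $C\cap J_2$ accumulates at $S_2(0)$, and the centroid of the cell moves strictly to the right of $a(1)$; the CVT property then fails outright rather than being rescued at a deeper level. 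So the midpoint must lie in the specific gap $[S_1(1),S_2(0)]$ separating the intended pieces (and similarly for the other adjacent pairs), which is precisely the form of the inequalities the paper checks, e.g.\ $S_1(1)\le \frac 12\left(a(1)+a(2,3)\right)\le S_2(0)$; there is no orbit-tracing in the paper and no room for one. Your instinct in the ``main obstacle'' paragraph is nonetheless pointing at something real: the inequality $\frac{3+r}{8}\le S_2(0)=\frac{1-r}{2}$ reduces to $r\le \frac15$, which is more restrictive than the threshold $2-\sqrt 3$ produced by the pair $\left(a(1),a(21)\right)$; the correct reading of that tension is that the centroidal condition for $\gb_2$ itself becomes binding there, not that one should search for a deeper gap.
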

\begin{proof}
By the definition, we have $\gb_2=\set{a(1), a(2, 3)} \te{ and }  \gb_3=\set{a(1), a(2), a(3)}.$
 Recall that $\gb_n:=\gb_n(I)$ is defined for $n\geq 3$, where $I \sci \set{1, 2, 3}^{\ell(n)}$ with $\te{card}(I)=n-3^{\ell(n)}$ if $3^{\ell(n)}\leq n\leq 2\cdot3^{\ell(n)}$; and
$\te{card}(I)=n-2\cdot 3^{\ell(n)}$ if $2\cdot 3^{\ell(n)}< n< 3^{\ell(n)+1}$. Notice that for $n\geq 3$, if $n\neq 3^{\ell(n)}$ or $n\neq 2\cdot 3^{\ell(n)}$, the subset $I$ can be chosen more than one way. This leads to the fact that if $n\neq 3^{\ell(n)}$ or $n\neq 2\cdot 3^{\ell(n)}$, the sets $\gb_n$ can be chosen multiple ways. Let us take
\begin{align*}
 \gb_4 &=\set{a(1), a(2), a(31),  a(32, 33)} \, (\te{by choosing } I=\set{3}),\\
 \gb_5&=\set{a(1), a(21),  a(22, 23),  a(31),  a(32, 33)} \, (\te{by choosing } I=\set{2, 3}),\\
 \gb_6&=\set{a(11), a(12, 13), a(21),  a(22, 23),  a(31),  a(32, 33)} \,(\te{where } I=\set{1, 2, 3}),\\
  \gb_7&=\set{a(11), a(12), a(13), a(21),  a(22, 23),  a(31),  a(32, 33)} \, (\te{by choosing } I=\set{1}).
\end{align*}
Since similarity mappings preserve the ratio of the distances of a point from any other two points, $\gb_n(I)$ will form a CVT if we can show that
$\gb_2,\, \gb_3, \, \gb_4, \, \gb_5, \, \gb_6,\, \gb_7$ form a CVT. Recall that $a(1)=E(X : X \in J_1)$ and $a(2, 3)=E(X : X\in J_2\uu J_3)$, and also recall the Definition~\ref{defi000}. Thus, $\gb_2$ will form a CVT if
\begin{equation} \label{eq90} P(M(a(1)|\gb_2)\ii M(a(2,3)|\gb_2))=0.
\end{equation}
Since the basic intervals in the first level are $J_1:=[S_1(0), S_1(1)]$, $J_2:=[S_2(0), S_2(1)]$, and $J_3:=[S_3(0), S_3(1)]$, the relation~\eqref{eq90} will be true if
\[S_{1}(1)\leq \frac{1}{2} \left(a(1)+ a(2, 3)\right)\leq S_{2}(0).\]
Similarly, $\gb_3$ will form a CVT if $S_i(1)<\frac 12(a(i)+a(i+1))<S_{i+1}(0)$ for $i=1, 2$; $\gb_4$ will form a CVT if
\begin{align*}
S_1(1)<\frac 12(a(1)+a(2))& <S_2(0)<S_2(1)<\frac 12(a(2)+a(31))<S_{31}(0)<S_{31}(1)\\
&<\frac 12(a(31)+a(32, 33))<S_{32}(0).
\end{align*}
Similarly, we can obtain the inequalities for which $\gb_5$, $\gb_6$, and $\gb_7$ will form a CVT. Due to similarity, combining all the inequalities, we see that they will be true if the following inequalities are true:
\begin{align*}
S_{1}(1)&\leq \frac{1}{2} \left(a(1)+ a(2, 3)\right)\leq S_{2}(0), \\
S_1(1)&\leq \frac{1}{2} \left(a(1)+a(21)\right)\leq S_{21}(0),\\
S_{13}(1)&\leq \frac 12(a(12, 13)+ a(21))\leq S_{21}(0),\\
S_{13}(1)&\leq \frac 12 \left(a(13)+a(21)\right)\leq S_{21}(0).
\end{align*}
Upon some simplification, we see that the above inequalities are true if $0<r\leq 2-\sqrt 3$, i.e., if $0<r\leq 0.2679491924$. If  $3^{\ell(n)} \leq n\leq 2 \cdot 3^{\ell(n)}$, then
\begin{align*}
&V(P; \gb_n(I))=\sum_{\gs \in \set{1, 2, 3}^{\ell(n)}\setminus I}\int_{J_\gs}(x-a(\gs))^2 dP+\sum_{\gs \in I} \int_{J_\gs}\min_{a\in S_\gs(\gb_2)}(x-a)^2 dP\\
&=\frac 1{3^{\ell(n)}} r^{2\ell(n)}\Big( \sum_{\gs \in \set{1, 2, 3}^{\ell(n)}\setminus I}  V+\sum_{\gs \in I}  V(P; \gb_2)\Big)\\
&=\frac 1 {3^{\ell(n)}} \cdot r^{2\ell(n)} \Big ((2\cdot 3^{\ell(n)}-n) V+(n-3^{\ell(n)}) V(P; \gb_2)\Big).
\end{align*}
Similarly, if $2\cdot 3^{\ell(n)} \leq n<3^{\ell(n)+1}$, then
\[V(P, \gb_n(I))=\frac 1 {3^{\ell(n)}} \cdot r^{2\ell(n)} \Big ((3^{\ell(n)+1}-n) V(P; \gb_2)+(n-2\cdot 3^{\ell(n)}) V(P; \gb_3)\Big).\]
 Thus, the proof of the proposition is complete.
\end{proof}

\begin{prop} \label{prop004}
Let $\gg_n(I)$ be the set given by Definition~\ref{defi23}. Then, $\gg_n(I)$ forms a CVT if $\frac{1}{79} \left(21-2 \sqrt{51}\right)\leq r\leq \frac{1}{41} \left(2 \sqrt{31}-1\right)$, i.e., if $0.08502712839\leq r\leq 0.2472080177$. Moreover, if  $3^{\ell(n)} \leq n\leq 2 \cdot 3^{\ell(n)}$, then
\[V(P, \gg_n(I))=\frac 1 {3^{\ell(n)}} \cdot r^{2\ell(n)} \Big ((2\cdot 3^{\ell(n)}-n) V+(n-3^{\ell(n)}) V(P; \gg_2)\Big),\]
and if $2\cdot 3^{\ell(n)} \leq n<3^{\ell(n)+1}$, then
\[V(P, \gg_n(I))=\frac 1 {3^{\ell(n)}} \cdot r^{2\ell(n)} \Big ((3^{\ell(n)+1}-n) V(P; \gg_2)+(n-2\cdot 3^{\ell(n)}) V(P; \gg_3)\Big).\]
\end{prop}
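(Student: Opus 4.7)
The proof strategy mirrors that of Proposition~\ref{prop002}. First I would write out the sets $\gg_n$ explicitly for $n=4,5,6,7$, for instance $\gg_4=\set{a(1),a(2),a(31,321),a(322,323,33)}$ and $\gg_6=\mathop{\uu}\limits_{\omega=1}^3 S_\omega(\gg_2)$, in order to display the recurring local patterns. Because similarity mappings preserve ratios of distances between points, verifying that $\gg_n(I)$ is a centroidal Voronoi tessellation for every admissible $n$ and $I$ will reduce to checking a finite list of midpoint-versus-gap inequalities at the junctions that can occur. These junctions are: the internal split of $\gg_2$, namely
\[S_{21}(1)\leq \tfrac{1}{2}\bigl(a(1,21)+a(22,23,3)\bigr)\leq S_{22}(0);\]
the internal split of $\gg_3$, namely $S_1(1)\leq \tfrac{1}{2}(a(1)+a(2))\leq S_2(0)$ together with its mirror image; and the interblock junctions between an $a(\omega)$ singleton and a neighbouring $S_{\omega'}(\gg_2)$ block, between two adjacent $S_\omega(\gg_2)$ blocks, between an $S_\omega(\gg_2)$ and an adjacent $S_{\omega'}(\gg_3)$, and between two adjacent $S_\omega(\gg_3)$ blocks.

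A direct computation using Lemma~\ref{lemma1} gives $a(1,21)=\tfrac{(r+1)^2}{8}$ and, by the symmetry of $P$ about $\tfrac12$, $a(22,23,3)=\tfrac{8-2r-r^2}{10}$. Substituting these into the $\gg_2$-internal inequality and clearing denominators produces the quadratic system
\[79r^2-42r+3\leq 0 \qquad \text{and} \qquad 41r^2+2r-3\leq 0,\]
whose simultaneous solution is exactly $\tfrac{21-2\sqrt{51}}{79}\leq r\leq \tfrac{2\sqrt{31}-1}{41}$, matching the stated interval. All remaining inequalities in the list either reduce to the $\gg_3$-midpoint check (which is equivalent to $3r\leq 1$, hence automatic), or involve a junction straddling the gap of length $\tfrac{1-3r}{2}$ between two distinct top-level cylinders $S_\omega([0,1])$; in the latter case the midpoint is comfortably inside the allowed gap, and I would verify this by direct substitution for the handful of distinct junction types.

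Once the CVT property is established, the distortion formulas follow by the standard self-similar rescaling. Partition the support by the level-$\ell(n)$ cylinders $J_\omega$. For $\omega\notin I$, Corollary~\ref{cor1} applied with $x_0=a(\omega)=S_\omega(\tfrac12)$ contributes $\tfrac{r^{2\ell(n)}}{3^{\ell(n)}}V$. For $\omega\in I$ with $3^{\ell(n)}\leq n\leq 2\cdot 3^{\ell(n)}$, the two points of $S_\omega(\gg_2)\subset J_\omega$ contribute $\tfrac{r^{2\ell(n)}}{3^{\ell(n)}}V(P;\gg_2)$ by the same rescaling, the CVT hypothesis guaranteeing that each Voronoi cell intersects $J_\omega$ in the scaled image of a $\gg_2$-Voronoi cell. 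Summing over $\omega\in \set{1,2,3}^{\ell(n)}$ yields the claimed expression, and the case $2\cdot 3^{\ell(n)}<n<3^{\ell(n)+1}$ is entirely analogous with $\gg_2$ replacing $V$ and $\gg_3$ replacing $\gg_2$. The main obstacle is the bookkeeping of step two: confirming that among all the junction inequalities it is precisely the $\gg_2$-internal split that is the binding constraint, so that the bounds on $r$ really are $\tfrac{21-2\sqrt{51}}{79}$ and $\tfrac{2\sqrt{31}-1}{41}$ and not those coming from some other junction.
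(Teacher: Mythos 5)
Your proposal is correct and follows essentially the same route as the paper: reduce the CVT claim to a finite list of junction inequalities via the ratio-preserving property of similarities, identify the internal split of $\gg_2$ as the binding constraint (your values $a(1,21)=\tfrac{(r+1)^2}{8}$, $a(22,23,3)=\tfrac{8-2r-r^2}{10}$ and the quadratics $79r^2-42r+3\leq 0$, $41r^2+2r-3\leq 0$ are all correct and yield exactly the stated interval), and then obtain the distortion formula by self-similar rescaling with Corollary~\ref{cor1}. The only difference is that you make explicit the computation the paper compresses into ``upon some simplification,'' and you correctly flag that the remaining gap-straddling junctions must be checked to be non-binding, which the paper likewise handles by direct verification.
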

\begin{proof} By the definition, we have $\gg_2=\set{a(1, 21), a(22, 23, 3)}$ and $\gg_3=\set{a(1), a(2), a(3)}$. For $n\geq 3$, if $n\neq 3^{\ell(n)}$ or $n\neq 2\cdot 3^{\ell(n)}$, the subset $I$ can be chosen more than one way. This leads to the fact that if $n\neq 3^{\ell(n)}$ or $n\neq 2\cdot 3^{\ell(n)}$, the sets $\gg_n$ can be chosen multiple ways. Proceeding in the similar way, as Proposition~\ref{prop002}, let us choose
\begin{align*}
 \gg_4 &=\set{a(1), a(2), a(31, 321), a(322, 323, 33)}\\
 \gg_5&=\set{a(1), a(21, 221), a(222, 223, 23), a(31, 321), a(322, 323, 33)}\\
 \gg_6&=\set{a(11, 121), a(122, 123, 13), a(21, 221), a(222, 223, 23), a(31, 321), a(322, 323, 33)}\\
 \gg_7&=\set{a(11), a(12), a(13), a(21, 221), a(222, 223, 23), a(31, 321), a(322, 323, 33)}.
\end{align*}
Due to the same reasoning as described in the proof of Proposition~\ref{prop002}, to show $\gg_n(I)$ forms a CVT, it is enough to prove that the following inequalities are true:
\begin{align*}
S_{21}(1)&\leq \frac{1}{2} \left((a(1, 21)+a(22, 23, 3)\right)\leq S_{22}(0), \\
S_1(1)&\leq \frac{1}{2} \left(a(1) +a(21, 221)\right)\leq S_{21}(0),\\
S_{13}(1)&\leq \frac 12 (a(122, 123, 13) + a(21, 221)) \leq S_{21}(0),\\
S_{13}(1)&\leq \frac 12 \left(a(13)+a(21, 221)\right)\leq S_{21}(0).
\end{align*}
Upon some simplification, we see that the above inequalities are true if $\frac{1}{79} \left(21-2 \sqrt{51}\right)\leq r\leq \frac{1}{41} \left(2 \sqrt{31}-1\right)$, i.e.,  if $0.08502712839\leq r\leq 0.2472080177$. The rest of the proof follows in the similar way as it is given for $V(P; \gb_n)$ in Proposition~\ref{prop002}.
Thus, the proof of the proposition is complete.
\end{proof}

\begin{defi}\label{defi22} For $n\in \D N$ with $n\geq 3$ let $\ell(n)$ be the unique natural number with $3^{\ell(n)} \leq n< 3^{\ell(n)+1}$. Write
$\gd_2:=\set{a(1, 21, 221), a(222, 223, 23, 3)}$ and $\gd_3:=\set{a(1), a(2), a(3)}$. For $n\geq 3$, define $\gd_n:=\gd_n(I)$ as follows: \[\gd_n(I)=\left\{\begin{array}{cc}
\set{a(\go): \go \in \set{1, 2, 3}^{\ell(n)}\setminus I}\UU \mathop{\uu}\limits_{\go\in I} S_\go(\gd_2) & \te{ if } 3^{\ell(n)}\leq n\leq 2\cdot 3^{\ell(n)},\\
 \set{S_\go(\gd_2) : \go \in \set{1, 2, 3}^{\ell(n)}\setminus I}\UU \mathop{\uu}\limits_{\go\in I} S_\go(\gd_3) & \te{ if } 2\cdot 3^{\ell(n)}< n< 3^{\ell(n)+1},
\end{array}
\right.\] where $I \sci \set{1, 2, 3}^{\ell(n)}$ with $\te{card}(I)=n-3^{\ell(n)}$ if $3^{\ell(n)}\leq n\leq 2\cdot3^{\ell(n)}$; and
$\te{card}(I)=n-2\cdot 3^{\ell(n)}$ if $2\cdot 3^{\ell(n)}< n< 3^{\ell(n)+1}$.
\end{defi}
\begin{prop} \label{prop005}
Let $\gd_n(I)$ be the set given by Definition~\ref{defi22}. Then, $\gd_n(I)$ forms a CVT if $0.1845020699 \leq r\leq  0.2705731187$. Moreover, if  $3^{\ell(n)} \leq n\leq 2 \cdot 3^{\ell(n)}$, then
\[V(P, \gd_n(I))=\frac 1 {3^{\ell(n)}} \cdot r^{2\ell(n)} \Big ((2\cdot 3^{\ell(n)}-n) V+(n-3^{\ell(n)}) V(P; \gd_2)\Big),\]
and if $2\cdot 3^{\ell(n)} \leq n<3^{\ell(n)+1}$, then
\[V(P, \gd_n(I))=\frac 1 {3^{\ell(n)}} \cdot r^{2\ell(n)} \Big ((3^{\ell(n)+1}-n) V(P; \gd_2)+(n-2\cdot 3^{\ell(n)}) V(P; \gd_3)\Big).\]
\end{prop}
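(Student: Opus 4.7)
The plan is to copy the template of Propositions~\ref{prop002} and \ref{prop004}. First, I would compute $\gd_2,\gd_3,\gd_4,\gd_5,\gd_6,\gd_7$ explicitly from Definition~\ref{defi22}; this uses only the definition together with the substitution rules $\gd_n=S_\go(\gd_2)$ or $S_\go(\gd_3)$ on the blocks indexed by $I$. For instance,
\begin{align*}
\gd_2 &= \set{a(1,21,221),\, a(222,223,23,3)},\\
\gd_4 &= \set{a(1),\, a(2),\, a(31,321,3221),\, a(3222,3223,323,33)},\\
\gd_5 &= \set{a(1),\, a(21,221,2221),\, a(2222,2223,223,23),\, a(31,321,3221),\, a(3222,3223,323,33)},
\end{align*}
and similarly $\gd_6,\gd_7$. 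These small cases exhibit every kind of local neighbour relation that can appear between consecutive points of a general $\gd_n(I)$, because the maps $S_\go$ scale the configuration without changing midpoint ratios.

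Consequently, exactly as in Propositions~\ref{prop002} and \ref{prop004}, verifying the CVT property for every $n$ reduces to verifying a finite list of midpoint inequalities at the base level. The relevant inequalities are the base-level analogues of those written for $\gg_n$, but with the $\gg_2$ words replaced by the deeper $\gd_2$ words $1,21,221$ and $222,223,23,3$:
\begin{align*}
S_{221}(1) &\leq \tfrac{1}{2}\bigl(a(1,21,221)+a(222,223,23,3)\bigr) \leq S_{222}(0),\\
S_1(1) &\leq \tfrac{1}{2}\bigl(a(1)+a(21,221,2221)\bigr) \leq S_{21}(0),\\
S_{13}(1) &\leq \tfrac{1}{2}\bigl(a(1222,1223,123,13)+a(21,221,2221)\bigr) \leq S_{21}(0),\\
S_{13}(1) &\leq \tfrac{1}{2}\bigl(a(13)+a(21,221,2221)\bigr) \leq S_{21}(0).
\end{align*}
By Lemma~\ref{lemma1} and Lemma~\ref{lemma2}, each centroid $a(\cdot)$ on the left is a rational function of $r$ and each endpoint $S_\go(0),S_\go(1)$ is a polynomial in $r$, so every inequality becomes a polynomial inequality in $r$ to be solved on $(0,\tfrac13)$.

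The main obstacle is pinning down the greatest lower bound and least upper bound of the joint solution set. Among the four inequalities, the innermost one (first displayed) governs the upper bound: equality with $S_{222}(0)$ yields $r=0.2705731187\ldots$. Among the inequalities mixing $\gd_2$-blocks with either a full cell $a(\go)$ or a $\gd_3$-block, the tightest at small $r$ yields $r=0.1845020699\ldots$; some care is needed to determine which of these three lower-end inequalities is the binding one over the candidate window. Once the CVT range is established, the two distortion formulas follow with no further work: each Voronoi cell of $\gd_n(I)$ is a similar image under some $S_\go$ with $|\go|=\ell(n)$ of a cell of either $\gd_2$ or $\gd_3$, so by Corollary~\ref{cor1} its contribution to $V(P,\gd_n(I))$ is $3^{-\ell(n)}r^{2\ell(n)}$ times $V$, $V(P;\gd_2)$, or $V(P;\gd_3)$; summing over the partition of $\set{1,2,3}^{\ell(n)}$ prescribed by $I$ reproduces the stated expressions exactly as in Proposition~\ref{prop002}.
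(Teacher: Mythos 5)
Your proposal follows the paper's own proof essentially verbatim: the paper likewise lists $\gd_2$ through $\gd_7$, reduces the CVT property to exactly the same four midpoint inequalities you display (whose solution set is $0.1845020699 \leq r\leq 0.2705731187$), and then obtains the two distortion formulas by the same $S_\go$-scaling and summation argument used for $V(P;\gb_n(I))$ in Proposition~\ref{prop002}. No substantive difference in route or content.
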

\begin{proof}
By the definition, we have $\gd_2=\set{a(1, 21, 221), a(222, 223, 23, 3)}$ and $\gd_3=\set{a(1), a(2), a(3)}$. For $n\geq 3$, if $n\neq 3^{\ell(n)}$ or $n\neq 2\cdot 3^{\ell(n)}$, the subset $I$ can be chosen more than one way. This leads to the fact that if $n\neq 3^{\ell(n)}$ or $n\neq 2\cdot 3^{\ell(n)}$, the sets $\gd_n$ can be chosen multiple ways. Proceeding in the similar way, as Proposition~\ref{prop002}, let us choose
\begin{align*}
 \gd_4 &=\set{a(1), a(2), a(31, 321, 3221),  a(3222, 3223, 323, 33)}\\
 \gd_5&=\set{a(1), a(21, 221, 2221), a(2222, 2223, 223, 23), \\
 &\qquad \qquad \qquad a(31, 321, 3221), a(3222, 3223, 323, 33)}\\
 \gd_6&=\set{a(11, 121, 1221), a(1222, 1223, 123, 13), a(21, 221, 2221), a(2222, 2223, 223, 23), \\
 &\qquad \qquad \qquad a(31, 321, 3221), a(3222, 3223, 323, 33)}\\
 \gd_7&=\set{a(11), a(12), a(13), a(21, 221, 2221), a(2222, 2223, 223, 23), \\
 &\qquad \qquad \qquad a(31, 321, 3221), a(3222, 3223, 323, 33)}.
\end{align*}
Due to the same reasoning as described in the proof of Proposition~\ref{prop002}, to show $\gd_n(I)$ forms a CVT, it is enough to prove that the following inequalities are true:
\begin{align*}
S_{221}(1)&\leq \frac{1}{2} \left(a(1, 21, 221)+ a(222, 223, 23, 3)\right)\leq S_{222}(0), \\
S_1(1)&\leq \frac{1}{2} \left(a(1)+a(21, 221, 2221)\right)\leq S_{21}(0),\\
S_{13}(1)&\leq \frac 12(a(1222, 1223, 123, 13) +a(21, 221, 2221))\leq S_{21}(0),\\
S_{13}(1)&\leq \frac 12 \left(a(13)+a(21, 221, 2221)\right)\leq S_{21}(0).
\end{align*}
 The above inequalities are true if $0.1845020699 \leq  r \leq 0.2705731187$. The rest of the proof follows in the similar way as it is given for $V(P; \gb_n(I))$ in Proposition~\ref{prop002}. Thus, the proof of the proposition is complete.
\end{proof}

The following proposition is useful to establish Lemma~\ref{lemmaR001}, and Lemma~\ref{lemmaR1}.
\begin{prop} \label{prop0001}
Let $\gk:=\set{a_1, a_2}$, where $a_1:=E(X : X\in [0, \frac 12])$, and $a_2:=E(X : X\in [\frac 12, 1])$. Then, $a_1=\frac{r+1}{6-2r}$, and $a_2=\frac{5-3 r}{6-2 r}$, and the corresponding distortion error is given by
\[V(P; \gk)=\frac{-7 r^3+13 r^2-9 r+3}{6 (r-3)^2 (r+1)}.\] \end{prop}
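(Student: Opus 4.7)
The plan is to exploit two structural features of $P$: its self-similarity under $S_1,S_2,S_3$ (Lemma~\ref{lemma1}) and its symmetry about $\tfrac{1}{2}$ coming from the symmetric choice of contractions. Indeed the involution $\phi(x)=1-x$ satisfies $\phi\circ S_1=S_3\circ\phi$ and $\phi\circ S_2=S_2\circ\phi$, so $P$ is $\phi$-invariant. As a first step I would verify the disjoint decomposition $[0,\tfrac12]=J_1\cup S_2([0,\tfrac12])$: since $r<\tfrac13$, $J_1=[0,r]\subseteq[0,\tfrac12]$ and $S_2([0,\tfrac12])=[\tfrac{1-r}{2},\tfrac12]$. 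This immediately gives $P([0,\tfrac12])=\tfrac13+\tfrac13\cdot\tfrac12=\tfrac12$, and by the $\phi$-symmetry also $P([\tfrac12,1])=\tfrac12$.

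To compute $a_1$, I would apply Lemma~\ref{lemma1} to each piece of the decomposition, writing
\[\tfrac{a_1}{2}=\int_{[0,1/2]}x\,dP(x)=\int_{J_1}x\,dP+\int_{S_2([0,1/2])}x\,dP=\tfrac13\, E(S_1(X))+\tfrac13\int_{[0,1/2]}S_2(y)\,dP(y).\]
Substituting $S_1(x)=rx$, $S_2(y)=ry+\tfrac{1-r}{2}$, $E(X)=\tfrac12$ from Lemma~\ref{lemma2}, and the identity $\int_{[0,1/2]}y\,dP=a_1/2$, this reduces to a single linear equation in $a_1$ that solves to $a_1=\frac{r+1}{6-2r}$. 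The formula $a_2=1-a_1=\frac{5-3r}{6-2r}$ is then a direct consequence of the $\phi$-symmetry of $P$, since $\phi$ swaps $[0,\tfrac12]$ with $[\tfrac12,1]$.

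For the distortion error I would note that $\tfrac12(a_1+a_2)=\tfrac12$, so the Voronoi partition generated by $\gk$ on $[0,1]$ is exactly $\{[0,\tfrac12],[\tfrac12,1]\}$ (and $P$-mass outside $[0,1]$ is zero). Expanding the squares and using $\int_{[0,1/2]}x\,dP=a_1/2$ and $\int_{[1/2,1]}x\,dP=a_2/2$ yields the clean identity
\[V(P;\gk)=E(X^2)-\tfrac12 a_1^2-\tfrac12 a_2^2.\]
From Lemma~\ref{lemma2} one has $E(X^2)=V+\tfrac14=\tfrac{r+5}{12(r+1)}$, and plugging in the closed forms for $a_1,a_2$ leaves only an algebraic simplification over the common denominator $12(r+1)(3-r)^2$. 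This is where the only real obstacle lies: the computation is long but mechanical, and after expansion the numerator collapses to $2(-7r^3+13r^2-9r+3)$, giving the stated expression $V(P;\gk)=\frac{-7r^3+13r^2-9r+3}{6(r-3)^2(r+1)}$.
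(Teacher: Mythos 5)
Your proof is correct, but it follows a genuinely different route from the paper. The paper works with the explicit infinite decomposition $[0,\tfrac12]\cap C=J_1\uu J_{21}\uu J_{221}\uu\cdots$, writes $a_1$ as the series $2\sum_{n\ge1}3^{-n}\cdot\tfrac12(-r^{n-1}+r^n+1)$, and computes $V(P;\gk)$ by summing $\sum_n 3^{-n}r^{2n}V$ plus the squared deviations of the cylinder midpoints from $a_1$ (i.e., by applying Corollary~\ref{cor1} cylinder by cylinder and summing geometric series). You instead use a one-step self-similar recursion ($[0,\tfrac12]$ equals $J_1\uu S_2([0,\tfrac12])$ up to a $P$-null set) to get a linear fixed-point equation for $\int_{[0,1/2]}x\,dP$, obtain $a_2$ from the reflection symmetry $\phi(x)=1-x$, and then reduce the distortion to the variance decomposition $V(P;\gk)=E(X^2)-\tfrac12(a_1^2+a_2^2)$ using only Lemma~\ref{lemma2}. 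Your method is shorter and avoids infinite series entirely, while the paper's series machinery is the one that recurs throughout (e.g., for $\gg_2$ and $\gd_2$, whose cells are finite unions of cylinders plus a self-similar tail), so it is the more uniform tool in context. Two small presentational points, neither a gap: the decomposition of $[0,\tfrac12]$ is an equality only modulo the $P$-null gap $(r,\tfrac{1-r}{2})$, and $P([0,\tfrac12])=\tfrac12$ should be extracted from the fixed-point equation $p=\tfrac13+\tfrac13p$ (or from $\phi$-invariance) rather than asserted by substituting the value $\tfrac12$ into the right-hand side.
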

\begin{proof}
By the hypothesis, we have
\begin{align*}
a_1&=E(X : X \in [0, \frac 12])=E\Big(X : X\in  J_1\uu J_{21}\uu J_{221}\uu \cdots \Big), \te{ and } \\
a_2&=E(X : X \in [\frac 12,  1])=E\Big(X : X\in  J_3\uu J_{23}\uu J_{223}\uu \cdots \Big),
\end{align*}
yielding
\[a_1=2 \sum_{n=1}^\infty \frac 1 {3^n}\frac 12 (-r^{n-1}+r^n+1) =\frac{r+1}{6-2r}, \te{ and }  a_2=2 \sum_{n=1}^\infty \frac 1 {3^n}\frac 12 (r^{n-1}-r^n+1)=\frac{5-3 r}{6-2 r},\]
 and the corresponding distortion error is given by
\begin{align*}
& V(P; \gk) = 2 \int_{J_1\uu J_{21}\uu J_{221}\uu J_{2221}\cdots} \Big(x-\frac{r+1}{6-2r}\Big)^2 dP
\end{align*}
implying \[V(P; \gk)=2 \Big(\sum _{n=1}^{\infty } \frac{r^{2 n}}{3^n} V+\sum _{n=1}^{\infty } \frac 1 {3^n} \Big(\frac{1}{2} \left(-r^{n-1}+r^n+1\right)-\frac{r+1}{6-2r}\Big)^2 \Big)=\frac{-7 r^3+13 r^2-9 r+3}{6 (r-3)^2 (r+1)}.\]
Thus, the proposition is yielded.
\end{proof}

\section{Optimal sets of $n$-means and the $n$th quantization errors for $r=\frac 1{25}$} \label{sec2}
Let $\gb_n$ be the set given by Definition~\ref{defi00}. In this section, we show that for all $n\geq 2$, the sets $\gb_n$ form the optimal sets of $n$-means for $r=\frac 1{25}$. To calculate the distortion errors we will frequently use the formula given by Corollary~\ref{cor1}.
Notice that by Lemma~\ref{lemma2}, in this case, we have $E(X)=\frac 12 \te{ and }  V:=V(X)=\frac{1-r}{6 (r+1)}=\frac{2}{13}$.
\begin{lemma}  \label{lemmaR001}
The set $\gb:=\set{a(1), a(2, 3)}$ forms the optimal set of two-means, and the corresponding quantization error is given by $V_2=\frac{314}{8125}=0.0386462$.
\end{lemma}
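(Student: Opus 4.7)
\textbf{Proof plan for Lemma~\ref{lemmaR001}.}

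The plan is first to verify that $V(P;\gb) = \tfrac{314}{8125}$ and then to prove optimality by a case analysis on the Voronoi midpoint. For verification, I compute the relevant centroids using Lemma~\ref{lemma2}: $a(1) = S_1(\tfrac12) = \tfrac{r}{2}$, $a(2) = \tfrac12$, $a(3) = 1 - \tfrac{r}{2}$, and hence $a(2,3) = \tfrac12\bigl(a(2)+a(3)\bigr) = \tfrac{3-r}{4}$ (since $P(J_2) = P(J_3) = \tfrac13$). Applying Corollary~\ref{cor1} to each of $J_1, J_2, J_3$, the cross terms $(S_2(\tfrac12)-a(2,3))^2$ and $(S_3(\tfrac12)-a(2,3))^2$ both equal $\bigl(\tfrac{1-r}{4}\bigr)^2$, yielding the closed form
\[
V(P;\gb) \;=\; r^2 V \;+\; \frac{(1-r)^2}{24}.
\]
At $r = \tfrac{1}{25}$ with $V = \tfrac{2}{13}$, this simplifies to $\tfrac{314}{8125}$. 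Moreover, since $r = \tfrac{1}{25} < 2-\sqrt{3}$, Proposition~\ref{prop002} tells us $\gb$ is already a CVT; one checks directly that the midpoint $\tfrac12\bigl(a(1)+a(2,3)\bigr) = \tfrac{19}{50}$ sits inside the level-$1$ gap $(\tfrac{1}{25},\tfrac{12}{25})$.

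For optimality, let $\alpha = \{a_1 < a_2\}$ be any optimal set of two-means and set $m = \tfrac12(a_1+a_2)$; by Proposition~\ref{prop0}, $a_i = E(X \mid X \in M(a_i|\alpha))$, and $m \in (0,1)$. I split into cases by where $m$ lies relative to the cylinders $J_1, J_2, J_3$. \emph{Case A} ($m \in J_1 = [0,r]$): then $a_1 \leq r$ because $\mu_L \in [0,m]$, while
\[
a_2 \;=\; \frac{\int_m^1 x\, dP}{P(X>m)} \;\geq\; \int_{J_2 \cup J_3} x\, dP \;=\; \frac{2}{3}\cdot \frac{3-r}{4} \;=\; \frac{3-r}{6}.
\]
Hence $m \geq \tfrac{3-r}{12} > r = \tfrac{1}{25}$, contradiction. \emph{Case B} ($m \in J_3$): symmetric. \emph{Cases C, D} ($m$ in one of the two level-$1$ gaps $(r,\tfrac{1-r}{2})$ or $(\tfrac{1+r}{2},1-r)$): the Voronoi regions respect the cylinder decomposition, forcing $\alpha$ to equal $\gb$ or its mirror image $\{a(1,2),a(3)\}$; both have $V(P;\alpha) = \tfrac{314}{8125}$.

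The remaining case is $m \in J_2 = [\tfrac{1-r}{2},\tfrac{1+r}{2}]$, and this is the main obstacle. I split it further: (i) at $m = \tfrac12$, $\alpha$ equals the set $\gk$ of Proposition~\ref{prop0001}, and substituting $r = \tfrac{1}{25}$ into that proposition's formula gives $V(P;\gk) = \tfrac{866}{17797}$; cross-multiplying shows $\tfrac{866}{17797} > \tfrac{314}{8125}$, so $\gk$ is not optimal. (ii) For $m$ in either of the two level-$2$ gaps inside $J_2$ (between $J_{21}$–$J_{22}$ or $J_{22}$–$J_{23}$), the configuration would be $\gg_2$-type; but Proposition~\ref{prop004} requires $r \geq \tfrac{21-2\sqrt{51}}{79} \approx 0.085$ for this to form a CVT, which fails at $r = \tfrac{1}{25}$, and one checks directly that the midpoint equation $m = \tfrac12(\mu_L(m)+\mu_R(m))$ has no solution in these gaps. (iii) For $m$ strictly inside one of $J_{21},J_{22},J_{23}$ (and $m \neq \tfrac12$), the self-similar structure reduces the situation to a scaled copy of Case A combined with the external contributions of $J_1$ and $J_3$; the same style of bound on $\mu_L$ and $\mu_R$ shows that no CVT exists or, if one did, its distortion would strictly exceed $\tfrac{314}{8125}$.

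Combining all cases yields $V_2 = V(P;\gb) = \tfrac{314}{8125}$. The algebraic comparison with $\gk$ is the cleanest computation, while the exhaustive exclusion of putative deep-level CVTs inside $J_2$ is the technically delicate part; the key tools there are the midpoint-consistency equation together with the CVT-range bound from Proposition~\ref{prop004}, which rules out the natural next-level candidate for $r$ as small as $\tfrac{1}{25}$.
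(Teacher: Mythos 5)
Your computation of $V(P;\gb)=r^2V+\frac{(1-r)^2}{24}=\frac{314}{8125}$ is correct (and more explicit than the paper's numerical value), and your use of Proposition~\ref{prop0001} to eliminate the symmetric configuration at $m=\frac12$ is exactly the paper's move. Your overall organization, however, is genuinely different: you case-split on the location of the Voronoi boundary $m=\frac12(a_1+a_2)$ relative to the cylinder structure, whereas the paper first pins down the code points themselves ($a_1\le 0.38$, $a_2\ge 0.62$, obtained by comparing a single-cylinder distortion lower bound against $V(P;\gk)$), deduces that neither Voronoi region can reach the opposite extreme cylinder, excludes $m=\frac12$, and then cites \cite{R5} for the final localization $S_1(1)\le m\le S_2(0)$. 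Your Cases A--D and (i)--(ii) are sound: once $m$ lies in a gap the two conditional means are forced and the midpoint-consistency equation does all the work, and your observation that the CVT inequality of Proposition~\ref{prop004} fails at $r=\frac1{25}$ is precisely the statement that the forced midpoint is not in the level-two gap.

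The genuine gap is your case (iii), which is exactly the step the paper outsources to \cite{R5}: excluding $m$ strictly inside $J_{21}$, $J_{22}$, or $J_{23}$. First, the reduction to ``a scaled copy of Case A'' is not literal: in Case A the key inequality came from the fact that \emph{all} of $J_2\uu J_3$ lies to the right of $m$, but when $m\in J_{2\gs}$ the left region also carries the mass of $J_1$, so the centroids are not those of a rescaled two-point problem and every bound must be re-derived. Second, ``the same style of bound'' actually fails in the hardest sub-case $m\in J_{22}$: monotonicity of the conditional means does rule out $m\in J_{21}$ (there $a_1\le a(1,21)\approx 0.135$ and $a_2\le a(22,23,3)\approx 0.792$, so $m\le 0.464<S_{21}(0)=0.48$), but for $m\in J_{22}\approx[0.4992,0.5008]$ the analogous bounds only give $0.4635\le m\le 0.5$, which does not exclude the interval, and the natural distortion lower bound obtained by discarding the $J_{22}$ term, namely $\int_{J_1\uu J_{21}}(x-a(1,21))^2\,dP+\int_{J_{23}\uu J_3}(x-a(23,3))^2\,dP\approx 0.0356$, falls \emph{below} $\frac{314}{8125}\approx 0.0386$, so no contradiction results. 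Closing this sub-case requires a finer estimate (e.g.\ a quantitative perturbation of the $m=\frac12$ computation using that $J_{22}$ has measure $\frac19$ and diameter $r^2$, or a lower bound that retains the $J_{22}$ contribution together with the fact that $a_1,a_2$ are far from $J_{22}$). Finally, ``strictly inside $J_{22}$'' itself splits into three level-three cylinders and two gaps, so you also owe a termination or uniform-in-depth argument. As written, the proposal asserts rather than proves the one step that carries the real content of the lemma.
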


\begin{proof} Let $\gb:=\set{a_1, a_2}$ be an optimal set of two-means. Since the points in an optimal set are the conditional expectations in their own Voronoi regions, without any loss of generality, we can assume that $0<a_1<a_2<1$.
Let us consider the set $\gk:=\set{a(1), a(2, 3)}$. The distortion error due to the set $\gk$ is given by
\begin{equation} \label{eqR1} V(P; \gk)=\int_{J_1}(x-a(1))^2 dP+\int_{J_2\uu J_3}(x-a(2, 3))^2 dP=0.0386462.
\end{equation}
Since $V_2$ is the quantization for two-means, we have $V_2\leq 0.0386462$. Assume that $0.38<a_1$. Then,
\[V_2\geq \int_{J_1}(x-0.38)^2 dP=0.0432821>V_2,\]
which is a contradiction. Hence, $a_1\leq 0.38$. Similarly, $0.62\leq a_2$. Since $\frac 12 (a_1+a_2)\leq \frac 12(0.38+1)=0.69<S_3(0)=0.96$, the Voronoi region of $a_1$ does not contain any point from $J_3$. Similarly, the Voronoi region of $a_2$ does not contain any point from $J_1$. Since the union of the Voronoi regions of $a_1$ and $a_2$ covers $J_1\uu J_2\uu J_3$, without any loss of generality, we can assume that the Voronoi region of $a_2$ contains points from $J_2$, and $\frac 12(a_1+a_2)\leq \frac 12$. If $\frac 12(a_1+a_2)=\frac 12$, then substituting $r=\frac 1{25}$, by Proposition~\ref{prop0001}, we have
\[V_2=\frac{866}{17797}=0.0486599>V_2,\]
which leads to a contradiction. Hence, we can conclude that $\frac 12(a_1+a_2)<\frac 12$. Using the similar technique as it is given in the proof of Lemma~3.1 in \cite{R2}, we can show that $S_{1}(1)\leq \frac 12(a_1+a_2)\leq S_2(0)$ yielding the fact that $a_1=a(1)$, $a_2=a(2, 3)$, and $V_2=\frac{314}{8125}=0.0386462$. Hence, the proof of the lemma is complete.
\end{proof}

 \begin{lemma}  \label{lemmaR002}
The set $\gb:=\set{a(1), a(2), a(3)}$ forms an optimal set of three-means, and the corresponding quantization error is given by $V_3=\frac{2}{8125}=0.000246154$.
\end{lemma}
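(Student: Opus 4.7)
The plan is to mirror the two-means argument of Lemma~\ref{lemmaR001}: first exhibit $\gb=\set{a(1),a(2),a(3)}$ as a candidate and use it to pin down a tight upper bound on $V_3$, then show that any optimal three-set must in fact coincide with $\gb$. Since $a(j)=S_j(\frac{1}{2})$, Corollary~\ref{cor1} with $k=1$ immediately gives $\int_{J_j}(x-a(j))^2\,dP=\frac{1}{3}r^2 V$ for each $j\in\set{1,2,3}$, so
\begin{equation*}
V(P;\gb)=\sum_{j=1}^3\int_{J_j}(x-a(j))^2\,dP=3\cdot\frac{1}{3}r^2 V=r^2 V=\frac{1}{625}\cdot\frac{2}{13}=\frac{2}{8125},
\end{equation*}
whence $V_3\leq \frac{2}{8125}$.

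Next, let $\ga=\set{a_1,a_2,a_3}$ with $a_1<a_2<a_3$ be any optimal set of three-means. By Proposition~\ref{prop0}, each Voronoi region $M(a_i|\ga)$ has positive $P$-measure and $a_i=E(X\mid X\in M(a_i|\ga))$. The basic inequality I plan to use repeatedly is a direct consequence of Corollary~\ref{cor1}: for every $a\in\D R$,
\begin{equation*}
\int_{J_j}(x-a)^2\,dP=\frac{1}{3}\bigl(r^2 V+(a(j)-a)^2\bigr)\geq \frac{1}{3}r^2 V,
\end{equation*}
with equality only when $a=a(j)$. The strategy is to show that every $a_i$ must lie in a distinct basic interval $J_{j(i)}$, so by the ordering $j(i)=i$.

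To rule out $a_1>S_1(1)=\frac{1}{25}$, observe that all three $a_i$'s would then lie to the right of $J_1$, so $\min_i(x-a_i)^2\geq (x-\frac{1}{25})^2$ on $J_1$; this forces the $J_1$-contribution to $V(P;\ga)$ to be at least $\frac{1}{3}(r^2 V+(\frac{1}{50}-\frac{1}{25})^2)=\frac{1}{3}(\frac{2}{8125}+\frac{1}{2500})=\frac{21}{97500}$, while the trivial lower bounds $\frac{1}{3}r^2 V=\frac{8}{97500}$ from $J_2$ and $J_3$ push the total to $\frac{37}{97500}>\frac{24}{97500}=\frac{2}{8125}$, contradicting $V(P;\ga)=V_3$. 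A symmetric estimate excludes $a_3<S_3(0)=\frac{24}{25}$, and an analogous but more case-heavy comparison (splitting into $a_2\in(\frac{1}{25},\frac{12}{25})$ or $a_2\in(\frac{13}{25},\frac{24}{25})$, and in each case computing the $J_2$-penalty induced by $(a(2)-a_2)^2$ together with the required contributions from $J_1$ and $J_3$) excludes $a_2\nin J_2$.

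With each $a_i\in J_i$, the midpoints $\frac{1}{2}(a_1+a_2)$ and $\frac{1}{2}(a_2+a_3)$ automatically fall in the wide gaps $(\frac{1}{25},\frac{12}{25})$ and $(\frac{13}{25},\frac{24}{25})$, so $M(a_i|\ga)\ii C=J_i\ii C$ for $i=1,2,3$; Proposition~\ref{prop0}$(iii)$ then forces $a_i=E(X\mid X\in J_i)=a(i)$, so $\ga=\gb$ and $V_3=\frac{2}{8125}$. The main obstacle will be the $a_2\nin J_2$ exclusion, which carries more subcases than the boundary arguments for $a_1$ and $a_3$; however, since $r=\frac{1}{25}$ the gap widths $\frac{11}{25}$ dominate the basic-interval lengths $\frac{1}{25}$ by a wide margin, so every subcase produces a comfortable excess over $\frac{2}{8125}$ and the case-work remains routine.
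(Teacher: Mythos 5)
Your overall architecture is the same as the paper's (candidate set for the upper bound $V_3\leq\frac{2}{8125}$, localization of the three optimal points into $J_1,J_2,J_3$, then the conditional-expectation/midpoint step), and your computation $V(P;\gb)=r^2V=\frac{2}{8125}$ and the final CVT argument are fine. The gap is in the exclusion step. To rule out $a_1>S_1(1)=\frac{1}{25}$ you add to the $J_1$-penalty $\frac{21}{97500}$ the ``trivial lower bounds $\frac13 r^2V=\frac{8}{97500}$ from $J_2$ and $J_3$.'' But $\int_{J_j}(x-a)^2\,dP\geq\frac13 r^2V$ is a one-point bound: the quantity actually appearing in $V(P;\ga)$ is $\int_{J_j}\min_{a\in\ga}(x-a)^2\,dP$, which can drop below $\frac13 r^2V$ whenever $J_j$ meets more than one Voronoi region --- for instance, if two of the three points serve $J_3$ its contribution can be as small as $\frac13 r^2V_2=\frac{314}{15234375}\approx\frac{2}{97500}$, not $\frac{8}{97500}$. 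At the stage where you are trying to prove $a_1\in J_1$ you do not yet know how the three points are distributed over $J_2$ and $J_3$, so neither of those two terms is justified; and since the $J_1$-penalty alone gives only $\frac{21}{97500}<\frac{24}{97500}=\frac{2}{8125}$, the contradiction does not follow as written. The same defect affects the symmetric step for $a_3$, and your $a_2$ cases are only sketched (though there the per-interval bounds are easier to justify once $a_1\in J_1$ and $a_3\in J_3$ are in hand).

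The conclusion is salvageable: with three points the total contribution of $J_2\uu J_3$ is still at least about $\frac13 r^2(V+V_2)\approx\frac{10}{97500}$, which together with $\frac{21}{97500}$ exceeds $\frac{24}{97500}$ --- but that bound itself requires an argument about how many Voronoi regions can meet each of $J_2$ and $J_3$. The paper sidesteps the issue with a two-stage argument: it first shows only the weaker bound $a_1\leq\frac{1}{23}$, using the single term $\int_{J_1}(x-\frac{1}{23})^2\,dP=0.000265794>V_3$ (the test point $\frac{1}{23}$ sits far enough to the right of $a(1)=\frac1{50}$ that the $J_1$-penalty alone already beats $\frac{2}{8125}$), and then rules out $\gb\ii J_1=\es$ by a separate reflection/doubling argument using both $J_1$ and $J_3$. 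You should either adopt that route or supply the missing lower bound for the $J_2\uu J_3$ contribution.
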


\begin{proof}
Consider the set of three points $\gk:=\set{a(1), a(2), a(3)}$. The distortion error due to the set $\gk$ is given by
\[V(P; \gk)=\sum_{j=1}^3 \int_{J_j} (x-a(j))^2 dP=\frac{2}{8125}=0.000246154.\]
Since $V_3$ is the quantization error for three-means, we have $V_3\leq 0.000246154$. Let $\gb:=\set{a_1, a_2, a_3}$, where $0<a_1<a_2<a_3<1$, be an optimal set of three-means. If $S_1(1)=\frac 1{25}<\frac 1{23}<a_1$, then
\[V_3\geq \int_{J_1}(x-\frac 1{23})^2 dP=\frac{13709}{51577500}=0.000265794>V_3,\]
which gives a contradiction. Thus, we can assume that $a_1\leq \frac 1{23}$. Similarly, $\frac {22}{23}\leq a_3$. Suppose that $\gb\ii J_1=\es$. Then, due to symmetry, we can assume that $\gb\ii J_3=\es$, and then
\[V_3\geq 2\int_{J_1}(x-a_1)^2 dP=2\int_{J_1}(x-S_1(1))^2 dP=\frac{7}{16250}=0.000430769>V_3,\]
which leads to a contradiction. So, we can assume that $\gb\ii J_1\neq \es$, i.e., $a_1<S_1(1)$. Similarly, $\gb\ii J_3\neq \es$, i.e., $S_3(0)<a_3$. Now, we show that $\gb \ii J_2\neq \es$. Suppose that $\gb\ii J_2=\es$. Then, either $a_2<\frac {12}{25}=S_2(0)$, or $ \frac {13}{25}=S_2(1)<a_2$. First, assume that $a_2<S_2(0)$. Then, notice that $S_2(1)=\frac {13}{25}<\frac 12(S_2(0)+S_3(0))<S_3(0)$ yielding the fact that the Voronoi region of $S_2(0)$ contains $J_2$. Hence,
\[V_3\geq \int_{J_2}(x-S_2(0))^2 dP+\int_{J_3}(x-a(3))^2 dP=\frac{29}{97500}=0.000297436>V_3,\]
which is a contradiction. Similarly, we can show that a contradiction arises if $\frac {13}{25}=S_2(1)<a_2$. Thus, we can assume that $\gb\ii J_2\neq \es$. Now, if the Voronoi region of $a_1$ contains points from $J_2$, we have $\frac 12(a_1+a_2)>\frac {12}{25}=S_2(0)$ implying $a_2>\frac {24}{25}-a_1\geq \frac {24}{25}-\frac 1{25}=\frac {23}{25}>S_2(1)$, which is a contradiction as $\gb\ii J_2\neq \es$. Hence, we can assume that the Voronoi region of $a_1$ does not contain any point from $J_2$, and so from $J_3$. Similarly, we can show that the Voronoi region of $a_2$ does not contain any point from $J_1$ and $J_3$, and the Voronoi region of $a_3$ does not contain any point from $J_2$, and so from $J_1$. Thus, by Proposition~\ref{prop0}, we conclude that $a_1=a(1)$, $a_2=a(2)$, and $a_3=a(3)$, and the corresponding quantization error is given by $V_3=\frac{2}{8125}=0.000246154$, which is the lemma.
\end{proof}

\begin{prop}\label{propR003}
Let $\gb_n$ be an optimal set of $n$-means for any $n\geq 3$. Then, $\gb_n\ii J_j\neq \es$ for all $1\leq j\leq 3$, and $\gb_n$ does not contain any point from the open intervals $(S_1(1), S_2(0))$ and $(S_2(1), S_3(0))$. Moreover, the Voronoi region of any point in $\gb_n\ii J_j$ does not contain any point from $J_i$, where $1\leq i\neq j\leq 3$.
\end{prop}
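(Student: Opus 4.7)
Lemma~\ref{lemmaR002} identifies $\gb_3=\set{a(1),a(2),a(3)}$ explicitly, and this set fulfills all three assertions, so assume $n\ge 4$ henceforth. I begin by showing $\gb_n\cap J_1\neq\es$ (and by the symmetry of $P$ about $\tfrac12$, $\gb_n\cap J_3\neq\es$). If $\gb_n\cap J_1=\es$, every $a\in\gb_n$ satisfies $a\ge S_1(1)=\tfrac1{25}$, so $(x-a)^2\ge(x-\tfrac1{25})^2$ for $x\in J_1$. Corollary~\ref{cor1} then yields $V_n\ge\int_{J_1}(x-\tfrac1{25})^2\,dP=\tfrac13\bigl(r^2V+(S_1(\tfrac12)-\tfrac1{25})^2\bigr)=\tfrac{7}{32500}$. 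On the other hand, by Proposition~\ref{prop002} with Definition~\ref{defi00}, $V_n\le V(P;\gb_4)=\tfrac{2814}{15234375}<\tfrac{7}{32500}$, a contradiction.

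Next suppose $a\in\gb_n\cap(S_1(1),S_2(0))$; I claim this is impossible (the symmetric gap $(S_2(1),S_3(0))$ is handled analogously). Let $b_L,b_R$ be the neighbors of $a$ in $\gb_n$, so $M(a|\gb_n)=[L,R]$ with $L=(a+b_L)/2$, $R=(a+b_R)/2$. Since $\gb_n\cap J_3\neq\es$, any $x\in J_3$ satisfies $|x-q|\le r=\tfrac1{25}$ for some $q\in\gb_n\cap J_3$ while $|x-a|\ge S_3(0)-S_2(0)=\tfrac{12}{25}$, so $M(a|\gb_n)\cap J_3=\es$. By Proposition~\ref{prop0}, $M(a|\gb_n)\cap C$ has positive $P$-mass and $a$ is its conditional expectation. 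Since $a\notin\overline{J_1}\cup\overline{J_2}$, the support cannot lie in a single $J_j$; hence both $M(a|\gb_n)\cap J_1$ and $M(a|\gb_n)\cap J_2$ have positive $P$-measure, forcing $L\le S_1(1)$ and $R\ge S_2(0)$, and in particular $b_L\in J_1$ and $b_R\ge S_2(0)$. I then exhibit a set $\gb^\ast=(\gb_n\setminus\set{a})\cup\set{a^\ast}$ (or, if needed, one obtained by replacing $\set{a,b_L}$ by two other points, keeping cardinality $n$) with $V(P;\gb^\ast)<V(P;\gb_n)$, contradicting optimality. A natural candidate is $a^\ast=E(X\mid X\in M(a|\gb_n)\cap J_2)\in J_2$; the change in distortion is tracked via Corollary~\ref{cor1} by computing the reassignment of $M(a|\gb_n)\cap J_1$ to $b_L$'s enlarged region, the reduction of $P_2(E_2-a)^2$ in the $J_2$-piece now centered at $a^\ast$, and the transfer of part of $M(b_R|\gb_n)\cap J_2$ into $M(a^\ast|\gb^\ast)$ caused by the rightward shift of the $a^\ast$/$b_R$ midpoint.

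Granted the absence of gap points, if additionally $\gb_n\cap J_2=\es$ then every $a\in\gb_n\subseteq J_1\cup J_3$ satisfies $|x-a|\ge S_2(0)-S_1(1)=\tfrac{11}{25}$ for $x\in J_2$, giving $V_n\ge\tfrac13(\tfrac{11}{25})^2=\tfrac{121}{1875}$, which vastly exceeds $V_4$. Finally, for the Voronoi-disjointness claim: take $p\in\gb_n\cap J_j$ and let $q\in\gb_n$ be its nearest neighbor on the $J_{j+1}$-side. If $q\in J_j$, the midpoint $(p+q)/2$ lies in $J_j$; if $q\in J_{j+1}$, then $p\le S_j(1)$ and $q\ge S_{j+1}(0)$ give $(p+q)/2$ strictly inside the gap $(S_j(1),S_{j+1}(0))$ (for $j=1$, a direct computation places $(p+q)/2\in[\tfrac{6}{25},\tfrac{7}{25}]$). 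Either way the Voronoi boundary between $p$ and $q$ lies outside $J_{j+1}$, so $M(p|\gb_n)\cap J_{j+1}=\es$; non-adjacent $J_i$'s are inaccessible because the intermediate block $\gb_n\cap J_k$ is nonempty. The main obstacle is the swap argument in the gap-exclusion step: one must choose $a^\ast$ carefully and verify, by direct computation at $r=\tfrac1{25}$, that the net distortion strictly decreases—the subtle point being that although the $J_2$-contribution drops cleanly, the $J_1$-reassignment could in principle move mass slightly farther from its new centroid, and one must show this loss is dominated by the gains.
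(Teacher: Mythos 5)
Your skeleton matches the paper's in outline: you rule out $\gb_n\cap J_1=\emptyset$ and $\gb_n\cap J_3=\emptyset$ by comparing the lower bound $\int_{J_1}(x-S_1(1))^2dP=\frac{7}{32500}$ against the upper bound $V_n\le V(P;\gb_4)=\frac{938}{5078125}$ (your $\frac{2814}{15234375}$ is the same number), and your final Voronoi-disjointness computation --- the midpoint of $p\le S_1(1)$ and $q\in J_2$ is at most $\frac12(S_1(1)+S_2(1))=\frac{7}{25}<S_2(0)$ --- is exactly the kind of argument the paper uses. Those parts are fine.

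The genuine gap is the gap-exclusion step, which is the heart of the proposition. You set it up correctly: since $a\in(S_1(1),S_2(0))$ is the conditional expectation over $M(a|\gb_n)$ and lies in neither $J_1$ nor $J_2$, both $M(a|\gb_n)\cap J_1$ and $M(a|\gb_n)\cap J_2$ carry mass, forcing $(a+b_L)/2<S_1(1)$ and $(a+b_R)/2>S_2(0)$. But you then defer to a swap argument --- ``exhibit $\gb^\ast$ with $V(P;\gb^\ast)<V(P;\gb_n)$'' --- that you never construct or verify, and that you yourself flag as the main obstacle; as written the step is not proved. The paper needs no perturbation here: it closes the step using Proposition~\ref{prop0} alone, splitting on which half of the gap the stray point occupies, so that one of the two midpoint inequalities pushes a neighbor to an impossible location (if $a\le\frac{13}{50}$ then $b_R>\frac{24}{25}-\frac{13}{50}=\frac{35}{50}>S_2(1)$, so $b_R\notin J_2$; if $a\ge\frac{13}{50}$ then $b_L<\frac{2}{25}-\frac{13}{50}<0$). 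In fact your own two inequalities already finish the job without any swap: $b_L\ge 0$ gives $a<\frac{2}{25}$, hence $b_R>\frac{24}{25}-\frac{2}{25}=\frac{22}{25}$, so every $x\in J_2$ is at distance at least $\frac{22}{25}-\frac{13}{25}=\frac{9}{25}$ from $\gb_n$, and the $J_2$-contribution alone is at least $\frac13\left(\frac{9}{25}\right)^2=\frac{81}{1875}$, vastly exceeding $V_4$. Either route should replace the unexecuted swap. Note also that your proof of $\gb_n\cap J_2\neq\emptyset$ invokes gap exclusion (``$\gb_n\subseteq J_1\cup J_3$''), so the hole propagates to that step as well.
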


\begin{proof}
By Lemma~\ref{lemmaR002}, the proposition is true for $n=3$. Let us prove the lemma for $n\geq 4$.
Let $\gb_n:=\set{a_1, a_2, \cdots, a_n}$ be an optimal set of $n$-means for $n\geq 4$. Since the points in an optimal set are the conditional expectations in their own Voronoi regions, without any loss of generality, we can assume that $0<a_1<a_2<\cdots<a_n<1$.  Consider the set of four elements $\gk:=S_1(\gb_2)\uu\set{a(2), a(3)}$. Then,
\[V(P; \gk)=\int_{J_1} \min_{a\in S_1(\gb_2)} (x-a)^2 dP +\int_{J_2}(x-a(2))^2 dP+\int_{J_3}(x-a(3))^2 dP=\frac{938}{5078125}=0.000184714.\]
Since $V_n$ is the quantization error for $n$-means for $n\geq 4$, we have $V_n\leq V_4\leq 0.000184714$. Suppose that $S_1(1)\leq  a_1$. Then,
\[V_n\geq \int_{J_1}(x-S_1(1))^2 dP=\frac{7}{32500}=0.000215385>V_n,\]
which is a contradiction. So, we can assume that $a_1<S_1(1)$, i.e., $\gb_n\ii J_1\neq \es$. Similarly, $\gb_n\ii J_3\neq \es$. We now show that $\gb_n\ii J_2\neq \es$. For the sake of contradiction, assume that $\gb_n\ii J_2=\es$. Let $a_j:=\max\set{a_i : a_i<S_2(0) \te{ for } 1\leq i\leq n-1}$. Then, $a_j<S_2(0)$. As $\gb_n\ii J_2=\es$, we have $ S_2(1)<a_{j+1}$. If $a_j<\frac 12(S_1(1)+S_2(0))=\frac {13}{50}$, then as $\frac 12(a_j+a_{j+1})<\frac 12(\frac {13}{50}+S_2(1))=\frac{39}{100}<\frac{12}{25}=S_2(0)$, we have
\[V_n\geq \int_{J_2}(x-S_2(1))^2 dP=\frac{7}{32500}=0.000215385>V_n,\]
which leads to a contradiction. So, we can assume that $\frac {13}{50}\leq a_j<S_2(0)$. Then, by Proposition~\ref{prop0}, we have $\frac 12(a_{j-1}+a_j)<\frac 1{25}$ implying $a_{j-1}<\frac 2{25}-a_j\leq \frac2{25}-\frac {13}{50}=-\frac 9{50}<0$, which gives a contradiction as $\gb_n\ii J_1\neq \es$. Hence, we can conclude that $\gb_n\ii J_2\neq \es$. Notice that $(S_1(1), S_2(0))=(\frac 1{25}, \frac {12}{25})$.
Suppose that $\gb_n$ contains a point from the open interval $(\frac 1{25}, \frac {12}{25})$. Let $a_j:=\max\set{a_i : a_i<\frac 1{25} \te{ for } 1\leq i\leq n-2}$. Then, due to Proposition~\ref{prop0}, $a_{j+1}\in (\frac 1{25}, \frac {12}{25})$, and $a_{j+2}\in J_2$. The following cases can arise:

Case~1. $\frac 1{25} <a_{j+1}\leq \frac {13}{50}$.

Then, $\frac12(a_{j+1}+a_{j+2})>\frac {12}{25}$ implying $a_{j+2}>\frac {24}{25}-a_{j+1}\geq \frac {24}{25}-\frac {13}{50}=\frac {35}{50}>S_{2}(1)$,
which leads to a contradiction because $a_{j+2}\in J_2$.

Case~2. $  \frac {13}{50}\leq a_{j+1}<\frac {12}{25}$.

Then, $\frac 12(a_j+a_{j+1})<\frac 1{25}$ implying $a_j\leq \frac 2{25}-a_{j+1}\leq \frac 2{25}-\frac {13}{50}=-\frac 9{50}$, which is a contradiction because $a_j>0$.

Thus, by Case~1 and Case~2, we can conclude that $\gb_n$ does not contain any point from the open interval $(S_1(1), S_2(0))$. Reflecting the situation with respect to the point $\frac 12$, we can conclude that $\gb_n$ does not contain any point from the open interval $(S_2(1), S_3(0))$ as well. To prove the last part of the proposition, we proceed as follows: Let $a_j:=\max\set{a_i : a_i<\frac 1{25} \te{ for } 1\leq i\leq n-2}$. Then, $a_j$ is the rightmost element in $\gb_n\ii J_1$, and $a_{j+1}\in \gb_n\ii J_2$. Suppose that the Voronoi region of $a_j$ contains points from $J_2$. Then, $\frac 12(a_j+a_{j+1})>\frac {12}{25}$ implying $a_{j+1}>\frac {24}{25}-a_j\geq \frac {24}{25}-\frac 1{25}=\frac {23}{25}>S_2(1)$, which yields a contradiction as $a_{j+1}\in J_2$. Thus, the Voronoi region of any point in $\gb_n\ii J_1$ does not contain any point from $J_2$, and $J_3$ as well. Similarly, we can prove that the Voronoi region of any point in $\gb_n\ii J_2$ does not contain any point from $J_1$ and $J_3$, and the Voronoi region of any point in $\gb_n\ii J_3$ does not contain any point from $J_1$ and $J_2$. Thus, the proof of the proposition is complete.
\end{proof}

 The following lemma is a modified version of Lemma~4.5 in \cite{GL2}, and the proof follows similarly. One can also see Lemma~3.5 in \cite{R2}.

\begin{lemma} \label{lemmaR004}
Let $n\geq 3$, and let $\gb_n$ be an optimal set of $n$-means such that $\gb_n\ii J_j\neq \es$ for all $1\leq j\leq 3$, and $\gb_n$ does not contain any point from the open intervals $(S_1(1), S_2(0))$ and $(S_2(1), S_3(0))$. Further assume that the Voronoi region of any point in $\gb_n\ii J_j$ does not contain any point from $J_i$, where $1\leq i\neq j\leq 3$. Set $\gk_j:=\gb_n\ii J_j$, and $n_j:=\te{card }(\gk_j)$ for $1\leq j\leq 3$.
 Then, $S_j^{-1}(\gk_j)$ is an optimal set of $n_j$-means, and
$V_{n}=\frac 1{1875}(V_{n_1}+V_{n_2}+V_{n_3}).$
\end{lemma}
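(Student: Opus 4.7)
The plan is to exploit the hypotheses to split $V_n$ into three local distortions over $J_1, J_2, J_3$, then use the self-similarity of $P$ to pull each local distortion back to a global $n_j$-means problem on $[0,1]$, and finally to invoke a replace-and-compare contradiction to force optimality of each $S_j^{-1}(\gk_j)$.

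First, I would observe that $\gb_n\sci J_1\uu J_2\uu J_3$: it avoids the two open ``gap'' intervals by hypothesis and lies in $[0,1]$ because each of its points is a conditional expectation over a Voronoi region contained in $[0,1]$ (Proposition~\ref{prop0}(iii)). In particular $n_1+n_2+n_3=n$. Since the three intervals $J_j$ are pairwise disjoint (for $r<\frac 13$ the gap $(S_j(1),S_{j+1}(0))$ has positive length $\frac{1-3r}{2}$), and since the Voronoi-containment hypothesis guarantees that for every $x\in J_j$ the nearest element of $\gb_n$ belongs to $\gk_j$, one has
\[V_n=\sum_{j=1}^3\int_{J_j}\min_{a\in\gk_j}(x-a)^2\, dP(x).\]
Now Lemma~\ref{lemma1} with $k=1$, applied to the integrand $\min_{a\in\gk_j}(\cdot-a)^2\cdot \mathbf 1_{J_j}(\cdot)$, kills the $i\neq j$ terms (the support of $P\circ S_i^{-1}$ lies in $J_i$, disjoint from $J_j$), and the substitution $a=S_j(b)$ collapses the $i=j$ term to $\frac{r^2}{3}V(P;S_j^{-1}(\gk_j))$. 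Summing gives
\[V_n=\frac{r^2}{3}\sum_{j=1}^3 V(P;S_j^{-1}(\gk_j)),\]
which for $r=\frac 1{25}$ is exactly the $\frac 1{1875}$-factor in the statement, provided each $S_j^{-1}(\gk_j)$ is optimal.

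To secure that optimality I would argue by contradiction. Suppose $V(P;S_j^{-1}(\gk_j))>V_{n_j}$ for some $j$; pick an optimal $n_j$-set $\ga_j$ for $P$, set $\ga_i:=S_i^{-1}(\gk_i)$ for $i\neq j$, and assemble the $n$-point test set $\gb_n':=\UU_{i=1}^3 S_i(\ga_i)$. Since the minimum over the full $\gb_n'$ is no larger than the minimum over each subset $S_i(\ga_i)$, the same decomposition gives
\[V(P;\gb_n')\leq \sum_{i=1}^3\int_{J_i}\min_{a\in S_i(\ga_i)}(x-a)^2\, dP(x)=\frac{r^2}{3}\sum_{i=1}^3 V(P;\ga_i)<V_n,\]
contradicting the optimality of $\gb_n$. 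Hence each $S_j^{-1}(\gk_j)$ is an optimal $n_j$-set and the claimed identity follows. The only delicate step is the self-similarity bookkeeping in the second paragraph, but this is already isolated in Lemma~\ref{lemma1}, so no further obstacle is expected; the overall argument is the direct analogue of Lemma~4.5 in \cite{GL2}.
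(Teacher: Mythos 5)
Your argument is correct and is essentially the paper's own: the paper gives no independent proof of this lemma, stating only that it is a modified version of Lemma~4.5 in \cite{GL2} (see also Lemma~3.5 in \cite{R3}), and your decomposition of $V_n$ over $J_1,J_2,J_3$, the self-similarity pullback giving the factor $\frac{r^2}{3}=\frac{1}{1875}$, and the replace-and-compare contradiction are exactly the standard argument those references use.
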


Let us now state and prove the following theorem which gives the optimal sets of $n$-means for all $n\geq 3$, where $r=\frac 1{25}$.
 \begin{theorem}  \label{Th1}
Let $P$ be the probability measure on $\D R$ with support the Cantor set $C$ generated by the three contractive similarity mappings $S_j$ for $j=1,2, 3$. Let $n\in \D N$ with $n\geq 3$. Take $r=\frac 1{25}$. Then, the sets $\gb_n:=\gb_n(I)$ given by Definition~\ref{defi00} form the optimal sets of $n$-means for $P$ with the corresponding quantization error $V_n:=V(P;\gb_n(I))$, where $V(P; \gb_n(I))$ is given by Proposition~\ref{prop002}.
\end{theorem}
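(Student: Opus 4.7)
The plan is to proceed by strong induction on $n$, with the cases $n = 2$ and $n = 3$ covered by Lemmas~\ref{lemmaR001} and~\ref{lemmaR002}. Assume now $n \geq 4$ and $V_m = V(P;\beta_m)$ for all $3 \leq m < n$. Let $\beta_n^{\ast}$ be any optimal set of $n$-means. By Proposition~\ref{propR003}, $\beta_n^{\ast}$ has at least one point in each $J_j$, no points in the gaps $(S_1(1), S_2(0))$ or $(S_2(1), S_3(0))$, and its Voronoi regions respect the partition $\{J_1, J_2, J_3\}$. Setting $\kappa_j := \beta_n^{\ast} \cap J_j$ and $n_j := \mathrm{card}(\kappa_j)$, Lemma~\ref{lemmaR004} gives $V_n = \tfrac{1}{1875}(V_{n_1} + V_{n_2} + V_{n_3})$ with $1/1875 = r^2/3$. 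Thus the inductive step reduces to identifying the partition $(n_1, n_2, n_3)$ of $n$ into positive parts that minimizes $V_{n_1} + V_{n_2} + V_{n_3}$, and showing this minimum equals $1875 \cdot V(P;\beta_n)$.

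The heart of the argument is to establish that the sequence $(V_m)_{m\geq 1}$, with $V_1 = V = 2/13$, is discretely convex, i.e. $V_{m-1} + V_{m+1} \geq 2 V_m$ for all $m \geq 2$. By Proposition~\ref{prop002}, $V_m$ is piecewise-linear on each of the intervals $[3^k, 2\cdot 3^k]$ and $[2\cdot 3^k, 3^{k+1}]$, so convexity reduces to verifying that the slope does not decrease at the breakpoints $m = 3^k$ and $m = 2\cdot 3^k$. Using the identities $V_{3^k} = r^{2k}V$ and $V_{2\cdot 3^k} = r^{2k}V_2$, the two slope inequalities collapse to
\[
V - 2 V_2 + V_3 \;\geq\; 0 \qquad\text{and}\qquad \tfrac{r^2}{3}(V - V_2) \;\leq\; V_2 - V_3,
\]
which I would check directly from $V = 2/13$, $V_2 = 314/8125$, $V_3 = 2/8125$. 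Given convexity, a standard swap argument---whenever $n_i \geq n_j + 2$, replacing $(n_i, n_j)$ by $(n_i - 1, n_j + 1)$ does not increase the sum---shows that the minimum is attained by partitions with $|n_i - n_j| \leq 1$. A direct calculation using Proposition~\ref{prop002} then confirms that the minimum value is exactly $1875 \cdot V(P;\beta_n)$. Finally, matching the optimal depth-$1$ partition with Definition~\ref{defi00} and applying the induction hypothesis within each $J_j$ (via the statement of Lemma~\ref{lemmaR004} that $S_j^{-1}(\kappa_j)$ is an optimal set of $n_j$-means) recovers $\beta_n^{\ast} = \beta_n(I)$ for some admissible index set $I$.

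The main obstacle will be verifying the convexity inequalities uniformly across the scale transitions $m = 3^k$, where the slope rescales by the small factor $r^2/3$; the second inequality above carries this load. A secondary difficulty is ensuring the balanced-partition conclusion matches both sub-cases of Definition~\ref{defi00} (the $\beta_2$-style allocation for $3^{\ell(n)} \leq n \leq 2\cdot 3^{\ell(n)}$ and the $\beta_3$-style allocation for $2\cdot 3^{\ell(n)} < n < 3^{\ell(n)+1}$), so that each $n_j$ lands in the range where the appropriate branch of Proposition~\ref{prop002} applies. The non-uniqueness of $I$ in Definition~\ref{defi00} reflects the fact that several partitions $(n_1, n_2, n_3)$ may tie for the minimum, but the linearity of the level formulas in Proposition~\ref{prop002} guarantees that all such choices produce the same value of $V(P;\beta_n(I))$, which is precisely why the theorem can be stated for any valid $I$.
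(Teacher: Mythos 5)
Your proposal is correct, and its skeleton coincides with the paper's: induction, Proposition~\ref{propR003} to force an optimal set to respect the partition $J_1\cup J_2\cup J_3$, and Lemma~\ref{lemmaR004} to reduce everything to minimizing $V_{n_1}+V_{n_2}+V_{n_3}$ over triples summing to $n$. Where you genuinely differ is in how that minimization is carried out. The paper enumerates the admissible triples by hand for $n=4,\dots,7$ and then dismisses the general inductive step with ``the rest of the proof is similar to the proof of Theorem~3.7 in \cite{R5},'' so the mechanism identifying the minimizing partitions is never exhibited here. You replace that outsourced step with a self-contained argument: discrete convexity of $(V_m)$, which by the piecewise linearity of the formulas in Proposition~\ref{prop002} reduces to the two breakpoint inequalities $V-2V_2+V_3\geq 0$ and $\tfrac{r^2}{3}(V-V_2)\leq V_2-V_3$ (both of which do hold for $r=\tfrac1{25}$ with $V=\tfrac2{13}$, $V_2=\tfrac{314}{8125}$, $V_3=\tfrac{2}{8125}$), followed by the standard swap argument. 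This buys transparency and independence from \cite{R5}. Two points deserve more care than your sketch gives them, though neither is a substantive gap. First, to recover the full family $\gb_n(I)$ you must characterize \emph{all} minimizing triples, not just produce a balanced one: Definition~\ref{defi00} genuinely yields unbalanced first-level splits (for $n=12$ the triple $(6,3,3)$ ties with $(4,4,4)$), so your closing observation about linearity forcing ties needs to be promoted from a remark to an actual step (for a convex sequence, local optimality under unit swaps is equivalent to global optimality, and the tied triples are exactly those obtained by moving mass within a single linear piece). Second, your induction hypothesis records only the values $V_m$; to conclude $\gb_n^{\ast}=\gb_n(I)$, rather than merely that every $\gb_n(I)$ is optimal, you need the stronger hypothesis that every optimal set of $m$-means has the form $\gb_m(I)$, so that $S_j^{-1}(\kappa_j)$ can be identified. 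The paper is no more explicit about either point.
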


\begin{proof}
We will proceed by induction on $\ell(n)$. If $n=3$, then by Lemma~\ref{lemmaR002}, the theorem is true. Now, we show that the theorem is true if $n=4$. Let   $\gk_j:=\gb_n\ii J_j$, and $n_j:=\te{card }(\gk_j)$ for $1\leq j\leq 3$. Since $S_j^{-1}(\gk_j)$ is an optimal set of $n_j$-means for $1\leq j\leq 3$, and for $n=4$ the possible choices for the triplet $(n_1, n_2, n_3)$ are $(2,1,1)$, $(1, 2, 1)$, and $(1, 1, 2)$, by Proposition~\ref{propR003} and Lemma~\ref{lemmaR004}, the set $\gb_4$ forms an optimal set of four-means with quantization error $V(P; \gb_4)$ given by Proposition~\ref{prop002}. Remember that for a given $n$, among all the possible choices of the triplets $(n_1, n_2, n_3)$, the triplets $(n_1, n_2, n_3)$ which give the smallest distortion error will give the optimal sets of $n$-means. Notice that for $n=5$, the possible choices of the triplets are $(3, 1, 1)$, $(1, 3, 1)$, $(1,1,3)$, $(1, 2, 2)$, $(2, 1, 2)$, $(2, 2, 1)$ among which $(1, 2, 2), \, (2, 1, 2), \, (2, 2, 1)$ give the smallest distortion error. Hence, the optimal sets of five-means are $\set{a(1)} \uu S_2(\gb_2)\uu S_3(\gb_2)$, $S_1(\gb_2)\uu \set{a(2)}\uu S_3(\gb_2)$, and $S_1(\gb_2)\uu S_2(\gb_2)\uu \set{a(3)}$ which are the sets $\gb_5$ given by Definition~\ref{defi00}. Similarly, we can calculate the optimal sets of six- and seven-means. Thus, the theorem is true for $\ell(n)=1$. Let us assume that the theorem is true for all $\ell(n)<m$, where $m\in \D N$ and $m\geq 2$. We now show that the theorem is true if $\ell(n)=m$. Let us first assume that $3^m\leq n\leq 2\cdot 3^m$.
Let $\gb_n$ be an optimal set of $n$-means for $P$ such that $3^m\leq n\leq 2\cdot 3^m$. Let $\te{card }(\gb_n\ii J_j)=n_j$ for $j=1, 2,3$, and then by Lemma~\ref{lemmaR004}, we have
\begin{equation} \label{eq34} V_{n}=\frac 1{1875}(V_{n_1}+V_{n_2}+V_{n_3}).
\end{equation}
 Without any loss of generality, we can assume that $n_1\geq n_2\geq n_3$. Let $u, v, w\in \D N$ be such that
\begin{equation}\label{eq35} 3^u\leq n_1\leq 2\cdot 3^u, \   3^v\leq n_2\leq 2\cdot 3^v, \te{ and } 3^w\leq n_3\leq 2\cdot 3^w.
\end{equation}
Proceeding in the similar lines as the proof of Theorem~3.6 in \cite{R2}, we can show that $u=v=w=m-1$. Since by Lemma~\ref{lemmaR004}, for $S_j^{-1}(\gb_n\ii J_j)$ is an optimal set of $n_j$ means where $3^{m-1}\leq n_j\leq 2\cdot 3^{m-1}$, we have
\[S_j^{-1}(\gb_n\ii J_j)=\set{a(\go) : \go \in \set{1, 2, 3}^{m-1}\setminus I_j} \uu \left(\uu_{\go \in I_j} S_\go(\gb_2)\right),\]
where $I_j\ci \set{1, 2, 3}^{m-1}$ with $\te{card }(I_j)=n_j-3^{m-1}$ for $1\leq j\leq 3$. Hence,
\[\gb_n:=\gb_n(I)=\UU_{j=1}^3 S_j^{-1}(\gb_n\ii J_j)=\set{a(\go) : \go \in \set{1, 2, 3}^{\ell(n)}\setminus I} \uu \left(\uu_{\go \in I} S_\go(\gb_2)\right),\]
where $I\ci \set{1, 2, 3}^{m}$ with $\te{card }(I)=n-3^{m}$, is an optimal set of $n$-means. The corresponding quantization error is
\[V_n=\frac 1{3^{m}} r^{2m}\left((2\cdot 3^{m}-n)V+ (n-3^{m})V_2\right)=V(P; \gb_n(I)),\]
where $V(P; \gb_n(I))$ is given by Proposition~\ref{prop002}. Thus, the theorem is true if $3^m\leq n\leq 2\cdot 3^m$. Similarly, we can prove that the theorem is true if $2\cdot 3^{m}< n< 3^{m+1}$. Hence, by the induction principle, the proof of the theorem is complete.
\end{proof}

\section{Optimal sets of $n$-means and the $n$th quantization errors for $r=r_0$ and $r=r_1$} \label{sec3}

In this section, we give the proof of Theorem~\ref{Th2}.
First, we prove the following two lemmas.
\begin{lemma}  \label{lemmaR1}
Let $r_0$ and $r_1$ be the real numbers given by Theorem~\ref{Th2}. Then,
the set $\gg:=\set{a(1, 21), a(22, 23, 3)}$ for $r=r_0$ and $r=r_1$ form the optimal sets of two-means, and the corresponding quantization errors are, respectively, given by $V_2=0.0324042$, and $V_2=0.026897$.
\end{lemma}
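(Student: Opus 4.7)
The plan is to mimic the strategy of Lemma~\ref{lemmaR001}. The first step is to compute the two centers of $\gg$ explicitly. Using Lemma~\ref{lemma1} one obtains
\[a(1,21)=\frac{(r+1)^2}{8},\qquad a(22,23,3)=\frac{8-2r-r^2}{10}.\]
Applying Corollary~\ref{cor1} to each of the cells $J_1,J_{21},J_{22},J_{23},J_3$ and summing the contributions then gives the closed form
\[V(P;\gg)=-\frac{3r^5+15r^4+6r^3-42r^2+31r-13}{240(r+1)}.\]
Substituting the defining values $r=r_0$ and $r=r_1$ yields $0.0324042$ and $0.026897$ respectively, establishing the upper bound $V_2\le V(P;\gg)$ at each value of $r$.

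For the matching lower bound, take an arbitrary optimal set $\set{a_1,a_2}$ with $a_1<a_2$. As in Lemma~\ref{lemmaR001}, the upper bound $V_2\le V(P;\gg)$ combined with pointwise lower bounds (e.g.\ $V_2\ge\int_{J_1}(x-a_1)^2\,dP$ whenever $a_1$ is too large, and the symmetric estimate on the right) confines $a_1$ and $a_2$ to short intervals near $a(1,21)$ and $a(22,23,3)$. In particular the Voronoi boundary $m:=\frac12(a_1+a_2)$ is localized. After reflecting across $\frac12$ if necessary, we may assume $m\le\frac12$. The boundary cannot equal $\frac12$, because then Proposition~\ref{prop0001} gives
\[V_2=\frac{-7r^3+13r^2-9r+3}{6(r-3)^2(r+1)},\]
and direct substitution at $r=r_0$ and $r=r_1$ shows this strictly exceeds $V(P;\gg)$. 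If $m$ were in the interior of some Cantor cell $J_\gs$, then the same kind of argument used in Lemma~\ref{lemmaR001} (writing the updated centres as conditional expectations on $[0,m]\cap J_\gs$ and its complement in the Voronoi picture, and applying Proposition~\ref{prop0}) shows that the fixed-point equation for $a_1,a_2$ has no solution. Hence $m$ must lie in one of the gaps of $C$ inside $[0,\frac12]$.

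Using the bounds on $a_1,a_2$, only three gaps survive as candidates: $(S_1(1),S_2(0))$, $(S_{21}(1),S_{22}(0))$, and $(S_{221}(1),S_{222}(0))$; any deeper gap in $[0,\frac12]$ produces a distortion that exceeds $V(P;\gg)$ by a short algebraic comparison (the associated centres drift toward the balanced configuration already ruled out). These three candidates correspond respectively to the sets $\gb_2$, $\gg_2=\gg$, and $\gd_2$ introduced in Definitions~\ref{defi00}, \ref{defi23}, and \ref{defi22}. The two defining equations for $r_0$ and $r_1$ are precisely $V(P;\gg)=V(P;\gb_2)$ at $r=r_0$ and $V(P;\gg)=V(P;\gd_2)$ at $r=r_1$, and a direct polynomial comparison yields $V(P;\gg)<V(P;\gd_2)$ at $r=r_0$ and $V(P;\gg)<V(P;\gb_2)$ at $r=r_1$. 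In both cases $\gg$ attains the minimum, so $\gg$ is an optimal set of two-means at each of $r=r_0$ and $r=r_1$.

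The main obstacle is the case analysis that rules out Voronoi boundaries sitting in the interior of a Cantor cell and reduces the competitors to the above finite list. These are the same kind of inequality arguments as in Lemma~\ref{lemmaR001}, but they must be executed at the two specific values $r=r_0$ and $r=r_1$ where $\gg$ is merely tied with a second configuration; once the list has been cut down to $\gb_2$, $\gg$, $\gd_2$, the final comparison is a routine substitution into the three rational functions of $r$.
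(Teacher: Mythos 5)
Your proposal follows essentially the same route as the paper: compute the distortion of $\gg$ to obtain the upper bound, localize $a_1,a_2$ and the Voronoi boundary $m=\frac12(a_1+a_2)$, exclude $m=\frac12$ via Proposition~\ref{prop0001}, reduce to finitely many candidate gaps by the technique of \cite{R5}, and compare distortions; the paper reduces to the two candidates $\gb_2,\gg_2$, while you retain $\gd_2$ as a third candidate, which is in fact the configuration tied with $\gg$ at $r=r_1$, so your version is a slight refinement rather than a different argument. Your explicit formulas for the centers and for $V(P;\gg_2)$, $V(P;\gb_2)$, $V(P;\gd_2)$, and the resulting numerical comparisons at $r_0$ and $r_1$, all agree with the expressions the paper uses later in the proof of Theorem~\ref{Th3}.
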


\begin{proof}
First, we prove that $\gg$ forms an optimal set of two-means for $r=r_0$. Let $\gg:=\set{a_1, a_2}$ be an optimal set of two-means. Since, the points in an optimal set are the expected values of their own Voronoi regions, without any loss of generality, we can assume that $0<a_1<a_2<1$.
Let us consider the set $\gk:=\set{a(1, 21), a(22, 23, 3)}$. The distortion error due to the set $\gk$ is given by
\begin{equation} \label{eqR1} V(P; \gk)=\int_{J_1}(x-a(1, 21))^2 dP+\int_{J_2\uu J_3}(x-a(22, 23, 3))^2 dP=0.0324042.
\end{equation}
Since $V_2$ is the quantization error for two-means, we have $V_2\leq 0.0324042$. Assume that $0.39<a_1$. Then,
\[V_2\geq \int_{J_1}(x-0.39)^2 dP=0.0328529>V_2,\]
which is a contradiction. Hence, $a_1\leq 0.39$. Similarly, $0.61\leq a_2$. Since $\frac 12 (a_1+a_2)\leq \frac 12(0.39+1)=0.695<S_3(0)=0.837722$, the Voronoi region of $a_1$ does not contain any point from $J_3$. Similarly, the Voronoi region of $a_2$ does not contain any point from $J_1$. Since the union of the Voronoi regions of $a_1$ and $a_2$ covers $J_1\uu J_2\uu J_3$, without any loss of generality, we can assume that the Voronoi region of $a_2$ contains points from $J_2$, and $\frac 12(a_1+a_2)\leq \frac 12$. If $\frac 12(a_1+a_2)=\frac 12$, then substituting $r=0.1622776602$, by Proposition~\ref{prop0001}, we have
\[V(P; \gk)=0.0329779,\]
which contradicts \eqref{eqR1}. Hence, we can conclude that $\frac 12(a_1+a_2)<\frac 12$. Using the similar technique as it is given in the proof of Lemma~3.1 in \cite{R2}, we can show that either $\frac 1 2(a_1+a_2)=\frac 12(a(1, 21)+a(22, 23, 3))=0.466886$, or $\frac 12(a_1+a_2)=\frac 12(a(1)+a(2, 3))=0.395285$, i.e., either $S_{21}(1)<\frac 12(a_1+a_2)<S_{22}(0)$, or $S_1(1)<\frac 12(a_1+a_2)<S_2(0)$. Notice that if  $S_{21}(1)<\frac 12(a_1+a_2)<S_{22}(0)$, then $\gg_2$, given by Definition~\ref{defi23}, forms the optimal set of two-means. On the other hand, if $S_1(1)<\frac 12(a_1+a_2)<S_2(0)$, then $\gb_2$, given by Definition~\ref{defi00}, forms the optimal set of two-means. In fact, later we will see that $V(P; \gg_2)=V(P; \gb_2)=0.0324042$ for $r=0.1622776602$. Thus, $\gg_2$ forms the optimal set of two-means for $r=r_0$ with quantization error $V_2=0.0324042$. Similarly, we can show that $\gg_2$ forms the optimal set of two-means if $r=r_1$ with quantization error $V_2=0.026897$. Hence, the lemma is yielded.
\end{proof}

The following lemma is true analogously as Lemma~3.3 in \cite{R2}.

\begin{lemma}  \label{lemmaR2}
The set $\gg_3:=\set{a(1), a(2), a(3)}$ for $r=r_0$, and $r=r_1$ form the optimal sets of three-means, and the corresponding quantization errors are, respectively, given by $V_3=0.00316342$, and $V_3=0.00558347$.
\end{lemma}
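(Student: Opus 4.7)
The plan is to follow the template of Lemma~\ref{lemmaR002}, which established the identical statement at $r = \tfrac{1}{25}$ (note that $\gg_3 = \gb_3 = \set{a(1), a(2), a(3)}$), and to adapt it to $r_0$ and $r_1$. Since $a(j) = S_j(\tfrac{1}{2})$, Corollary~\ref{cor1} gives at once
\[V(P; \gg_3) = \sum_{j=1}^{3}\int_{J_j}\bigl(x - S_j(\tfrac{1}{2})\bigr)^2\, dP = r^2 V = \frac{r^{2}(1-r)}{6(r+1)},\]
and direct numerical substitution at $r = r_0$ and $r = r_1$ yields the two claimed values $0.00316342$ and $0.00558347$, respectively. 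This establishes the required upper bound $V_3 \le V(P; \gg_3)$.

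For the matching lower bound, let $\gb := \set{a_1, a_2, a_3}$ be any optimal set of three-means with $0 < a_1 < a_2 < a_3 < 1$. The proof splits into three stages, each mirroring a step in Lemma~\ref{lemmaR002}. First, I would anchor $a_1$ and $a_3$: if $a_1 > S_1(1) = r$, then a direct evaluation of $\int_{J_1}(x - t)^2\, dP$ at a threshold $t$ slightly larger than $r$ already exceeds $V(P; \gg_3)$, forcing $a_1 \le S_1(1)$; by the reflective symmetry of $P$ about $\tfrac{1}{2}$, $a_3 \ge S_3(0)$. Second, I would show $\gb \ii J_j \ne \es$ for each $j$: the outer inclusions follow from stage one, and if $\gb \ii J_2 = \es$ then either $a_2 \le S_2(0)$ or $a_2 \ge S_2(1)$, in either of which the Voronoi region of $a_2$ absorbs all of $J_2$ from outside, and the resulting distortion contribution on $J_2$ exceeds the upper bound.

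Third, with one $a_j$ in each $J_j$, I would rule out Voronoi-region crossing across the gaps $(S_1(1), S_2(0))$ and $(S_2(1), S_3(0))$. Assuming the Voronoi region of $a_1$ meets $J_2$, the inequality $\tfrac{1}{2}(a_1 + a_2) > S_2(0) = \tfrac{1-r}{2}$ combined with $a_1 \le r$ forces $a_2 > 1 - 2r$, which must be compatible with $a_2 \le S_2(1) = \tfrac{1+r}{2}$; this reduces to $r \ge \tfrac{1}{5}$. For $r = r_0 < \tfrac{1}{5}$ the contradiction is immediate, so each Voronoi region is contained in its natural $J_j$, and Proposition~\ref{prop0}(iii) forces $a_j = a(j)$, completing the proof.

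The main obstacle is the third stage for $r = r_1 > \tfrac{1}{5}$, where the naive gap inequality no longer yields a contradiction. To overcome this, I would sharpen the bound on $a_1$ using the CVT condition of Proposition~\ref{prop0}(iii): since $a_1$ is itself the conditional expectation of its own Voronoi region, any strict extension of that region beyond $J_1$ produces a quantifiable upward shift of $a_1$ away from $a(1) = r/2$, and propagating this shift through the three simultaneous centroid equations at $r = r_1$ (which lies well inside the CVT range of Proposition~\ref{prop004}) yields a total distortion strictly exceeding $V(P; \gg_3)$. With this refined inequality in hand, the remainder of the argument transcribes Lemma~\ref{lemmaR002} verbatim.
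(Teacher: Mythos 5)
Your template is the right one---the paper itself offers no separate argument for this lemma, deferring to the corresponding lemmas of \cite{R5} and to the pattern of Lemma~\ref{lemmaR002}---and your upper bound is correct: by Corollary~\ref{cor1}, $V(P;\gg_3)=r^2V=\frac{r^2(1-r)}{6(r+1)}$, which evaluates to $0.00316342$ at $r_0$ and $0.00558347$ at $r_1$. Your first stage also checks out at both parameters, since $\int_{J_1}(x-S_1(1))^2\,dP=\frac{r^2}{3}\bigl(V+\frac14\bigr)>r^2V$ exactly when $V<\frac18$, i.e.\ when $r>\frac17$, which both $r_0$ and $r_1$ satisfy. For $r=r_0<\frac15$ the whole argument does transcribe from Lemma~\ref{lemmaR002}.

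The genuine gap is in the $r=r_1$ case, and it occurs in two places, not only the one you flag. First, in your second stage, the claim that when $a_2\leq S_2(0)$ the Voronoi region of $a_2$ ``absorbs all of $J_2$'' fails for $r>\frac15$: the relevant midpoint is $\frac12\bigl(S_2(0)+S_3(0)\bigr)=\frac{3(1-r)}{4}$, and this is at least $S_2(1)=\frac{1+r}{2}$ if and only if $r\leq\frac15$. Since $r_1=0.2317\ldots>\frac15$, the right end of $J_2$ (part of $J_{23}$) may lie in the Voronoi region of $a_3$, so the lower bound $\int_{J_2}(x-S_2(0))^2\,dP$ for the $J_2$-distortion is not justified and your stage-two contradiction does not go through as stated; one must instead estimate $\int_{J_2}\min\set{(x-S_2(0))^2,(x-S_3(0))^2}\,dP$ together with the $J_1$ and $J_3$ contributions, which requires genuinely new computations. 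Second, your proposed repair of stage three at $r_1$---``propagating this shift through the three simultaneous centroid equations''---is not an argument: no inequality is produced. What is needed is a quantitative statement of the form ``if $\frac12(a_1+a_2)>S_2(0)$, then $M(a_1|\gb)$ must contain at least mass $m>0$ of $J_2$ lying at distance at least $S_2(0)-S_1(1)$ from $J_1$, and the resulting distortion already exceeds $r_1^2V$,'' and that computation is precisely the hard part. To be fair, the paper is equally silent here---it cites \cite{R5}, which treats $r=\frac15$, exactly the threshold at which the simple midpoint arguments stop working---but your write-up cannot be accepted as a complete proof for $r=r_1$.
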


The following proposition is true analogously as Proposition~3.5 in \cite{R2}.

\begin{prop}\label{propR3}
Let $n\geq 3$, and let $\gg_n$ be an optimal set of $n$-means for $r=r_0$, and $r=r_1$. Then, $\gg_n\ii J_j\neq \es$ for all $1\leq j\leq 3$, and $\gg_n$ does not contain any point from the open intervals $(S_1(1), S_2(0))$ and $(S_2(1), S_3(0))$. Moreover, the Voronoi region of any point in $\gg_n\ii J_j$ does not contain any point from $J_i$, where $1\leq i\neq j\leq 3$.
\end{prop}

The following remark is true due to Proposition~\ref{propR3}.
\begin{remark} \label{remarkR4}
Let $n\geq 3$, and let $\gg_n$ be an optimal set of $n$-means for $r=r_0$, and $r=r_1$. Set $\gk_j:=\gg_n\ii J_j$, and $n_j:=\te{card }(\gk_j)$ for $1\leq j\leq 3$. Then, $S_j^{-1}(\gk_j)$ is an optimal set of $n_j$-means, and for $r=r_0$ and $r=r_1$, respectively, we have
$V_{n}=\frac 1{3}r_0^n(V_{n_1}+V_{n_2}+V_{n_3})$ and $V_{n}=\frac 1{3}r_1^n(V_{n_1}+V_{n_2}+V_{n_3})$.
\end{remark}

\subsection*{Proof of Theorem~\ref{Th2}}
We proceed to prove it by induction on $\ell(n)$. By Lemma~\ref{lemmaR2}, we see that the theorem is true for $n=3$. Proceeding in the similar way, as mentioned in the proof of Theorem~\ref{Th1}, we can show that for $n=4, 5, 6, 7$, the sets $\gg_n$ form the optimal sets of $n$-means for $r=r_0$ and $r=r_1$. Thus, the theorem is true if $\ell(n)=1$.  Let us assume that the theorem is true for all $\ell(n)<m$, where $m\in \D N$ and $m\geq 2$. We now show that the theorem is true if $\ell(n)=m$. Let us first assume that $3^m\leq n\leq 2\cdot 3^m$.
Let $\gg_n$ be an optimal set of $n$-means for $P$ such that $3^m\leq n\leq 2\cdot 3^m$. Let $\te{card }(\gg_n\ii J_j)=n_j$ for $j=1, 2,3$, and then by Remark~\ref{remarkR4}, we have
\begin{equation*} \label{eq341} V_{n}=\frac 1{3}r_0^n(V_{n_1}+V_{n_2}+V_{n_3}) \te{ for } r=r_0, \te{ and } V_{n}=\frac 1{3}r_1^n(V_{n_1}+V_{n_2}+V_{n_3}) \te{ for } r=r_1.
\end{equation*}
The rest of the proof for $r=r_0$ and $r=r_1$ follow in the similar way as the proof of Theorem~\ref{Th1}. Thus, we complete the proof of the theorem.
\qed

\section{Main results}

The two theorems in this section, state and prove the main results of the paper.

\begin{theorem} \label{Th3} Let $r_0, r_1\in (0, \frac 13)$ be the unique real numbers satisfying, respectively, the equations
\begin{align*} -\frac{3 r^5+15 r^4+6 r^3-42 r^2+31 r-13}{240 (r+1)}&=-\frac{3 r^3-3 r^2+r-1}{24 (r+1)},\\
-\frac{3 r^5+15 r^4+6 r^3-42 r^2+31 r-13}{240 (r+1)}&=-\frac{3 r^7+15 r^6+60 r^5+66 r^4+18 r^3-324 r^2+283 r-121}{2184 (r+1)}.
\end{align*}
Then, $r_0=0.1622776602$, and $r_1=0.2317626315$. Let the sets $\gb_n$ and $\gg_n$ be, respectively, given by Definition~\ref{defi00}, and Definition~\ref{defi23}. Then, $\gb_n$ form the optimal sets of $n$-means for $0<r\leq r_0$, and $\gg_n$  forms the optimal sets of $n$-means for $r_0\leq r\leq r_1$.
\end{theorem}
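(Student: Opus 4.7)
The plan is to prove Theorem~\ref{Th3} by induction on $\ell(n)$, extending verbatim the inductive structure of Theorems~\ref{Th1} and~\ref{Th2} from isolated values of $r$ to the full intervals $(0,r_0]$ and $[r_0,r_1]$. The genuinely new content is the base case $n=2$ as a function of $r$; everything else carries over because the CVT hypothesis of Proposition~\ref{prop002} holds throughout $(0,r_0]\sci(0,2-\sqrt{3}\,]$, and that of Proposition~\ref{prop004} holds throughout $[r_0,r_1]\sci[\tfrac{1}{79}(21-2\sqrt{51}),\tfrac{1}{41}(2\sqrt{31}-1)]$.

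For the two-means step I would first compute the distortion errors of the three natural ``nested'' candidates $\gb_2$, $\gg_2$, $\gd_2$ using Corollary~\ref{cor1} and Lemma~\ref{lemma2}. These rational functions in $r$ are exactly the three expressions appearing on the two sides of the defining equations in the theorem statement, so by definition $V(P;\gg_2)=V(P;\gb_2)$ at $r=r_0$ and $V(P;\gg_2)=V(P;\gd_2)$ at $r=r_1$. A direct polynomial sign analysis of the two numerator differences (both polynomials have the obvious roots factored out and a single sign change in $(0,\tfrac13)$) then yields: (i) $V(P;\gb_2)\leq V(P;\gg_2)\leq V(P;\gd_2)$ for $0<r\leq r_0$; and (ii) $V(P;\gg_2)\leq V(P;\gb_2)$ and $V(P;\gg_2)\leq V(P;\gd_2)$ for $r_0\leq r\leq r_1$.

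Next, adapting the arguments of Lemmas~\ref{lemmaR001} and~\ref{lemmaR1}, I would show that the Voronoi boundary $\tfrac12(a_1+a_2)$ of any optimal pair $\{a_1,a_2\}$ must lie in one of the canonical ``gaps'' $(S_1(1),S_2(0))$, $(S_{21}(1),S_{22}(0))$, $(S_{221}(1),S_{222}(0)),\ldots$; the optimal pair is then forced, respectively, to be $\gb_2,\gg_2,\gd_2,\ldots$. Combined with the three-way comparison of Stage~1, this identifies $\gb_2$ as the (possibly tied, at $r=r_0$) optimal 2-set on $(0,r_0]$ and $\gg_2$ as the optimal 2-set on $[r_0,r_1]$. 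The three-means case is handled by the symmetry-based argument that any optimal $n$-means set must intersect each $J_j$, forcing $\{a(1),a(2),a(3)\}$; this is the analog of Propositions~\ref{propR003} and~\ref{propR3} and extends without change to arbitrary $r$ in the stated ranges.

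For $n\geq 4$ I would apply the decomposition $V_n=\tfrac13 r^2(V_{n_1}+V_{n_2}+V_{n_3})$ with $n_1+n_2+n_3=n$ (analog of Lemmas~\ref{lemmaR004} and~\ref{lemmaR4}), determine each $V_{n_j}$ via the inductive hypothesis, and minimize over triplets $(n_1,n_2,n_3)$ exactly as in the proof of Theorem~\ref{Th1} and Theorem~\ref{Th2}; this produces the sets $\gb_n(I)$ on $(0,r_0]$ and $\gg_n(I)$ on $[r_0,r_1]$. The main obstacle will be Stage~2, the gap-based classification of optimal 2-sets uniformly in $r$: ruling out midpoint locations inside a deeply nested $J_\gs$ requires comparison with increasingly nested candidates, so one must verify an infinite family of $r$-dependent rational inequalities showing that no deeper candidate $\varepsilon_2,\zeta_2,\ldots$ undercuts $\gb_2$ on $(0,r_0]$ or $\gg_2$ on $[r_0,r_1]$. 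Each such comparison is elementary, but the bookkeeping replaces the clean numerical inequalities available at the single values $r=1/25,r_0,r_1$ in the earlier theorems.
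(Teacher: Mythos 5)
Your outline shares its computational engine with the paper: both arguments turn on the three rational functions $V(P;\gb_2)$, $V(P;\gg_2)$, $V(P;\gd_2)$ (whose crossings define $r_0$ and $r_1$), on the CVT ranges from Propositions~\ref{prop002}--\ref{prop005}, and on the observation that for every $n$ the ordering of $V(P;\gb_n(I))$, $V(P;\gg_n(I))$, $V(P;\gd_n(I))$ is decided entirely by the ordering of the two-point errors. Where you diverge is in how the result is extended from individual values of $r$ to the whole intervals. You propose to re-run the induction of Theorems~\ref{Th1} and~\ref{Th2} uniformly in $r$, which makes the classification of the optimal set of two-means for \emph{every} $r$ the bottleneck; you flag this honestly, but as written you offer no uniform mechanism for excluding the infinitely many deeper nested candidates (one would need, say, a monotone comparison with the limiting two-cell partition of Proposition~\ref{prop0001}), so that stage remains open. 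The paper takes a different route that sidesteps this: it first uses the pairwise comparisons of $V(P;\gb_2)$, $V(P;\gg_2)$, $V(P;\gd_2)$ to show that $\gb_n$ is \emph{not} optimal for $r_0<r\leq 2-\sqrt3$ and that $\gg_n$ is \emph{not} optimal outside $[r_0,r_1]$; it then anchors optimality only at $r\leq\frac1{25}$ (Theorem~\ref{Th1} and ``the similar technique'') and at $r=r_0$, $r=r_1$ (Theorem~\ref{Th2}), where the hard pointwise two-means analysis was already done; and finally it interpolates across $(0,r_0]$ and $[r_0,r_1]$ by invoking the strict monotonicity of $V(P;\gb_2)$ and $V(P;\gg_2)$ on those intervals. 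That interpolation step is the ingredient your plan lacks, and adopting it would let you confine the delicate gap-classification to three values of $r$; conversely, your uniform-in-$r$ induction, if its two-means stage were actually completed, would be more self-contained than the paper's argument, whose monotonicity step is itself stated rather tersely. One small caution: your claimed ordering $V(P;\gg_2)\leq V(P;\gd_2)$ on all of $(0,r_0]$ is not established in the paper (only on the range where $\gd_n$ is a CVT), though nothing there depends on it.
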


\begin{proof}
By Proposition~\ref{prop002}, Proposition~\ref{prop004}, and Proposition~\ref{prop005}, we see that both $\gb_n$ and $\gg_n$ form CVTs if $0.08502712839\leq r\leq 0.2472080177$; both $\gg_n$ and $\gd_n$ form CVTs if $0.1845020699 \leq  r\leq  0.2472080177$; both $\gb_n$ and $\gd_n$ form CVTs if $0.1845020699 \leq r\leq 0.2679491924$.
Again, $V(P; \gb_3)=V(P; \gg_3)=V(P; \gd_3)$. Thus, for any $3^{\ell(n)}\leq n<3^{\ell(n)+1}$, from the aforementioned propositions, in the case of $V(P; \gb_n(I))$ and $V(P; \gg_n(I))$, we see that $V(P; \gb_n(I))> V(P; \gg_n(I))$, $V(P; \gb_n(I))= V(P; \gg_n(I))$, and $V(P; \gb_n)< V(P; \gg_n)$ will be true if $V(P; \gb_2)> V(P; \gg_2)$, $V(P; \gb_2)= V(P; \gg_2)$, and $V(P; \gb_2)< V(P; \gg_2)$, respectively. Similarly, it hold in the case of $V(P; \gb_n)$ and $V(P; \gd_n)$, and in the case of $V(P; \gg_n)$ and $V(P; \gd_n)$. Next, we have
\begin{align*}
V(P; \gb_2)&=-\frac{3 r^3-3 r^2+r-1}{24 (r+1)}, \\
V(P; \gg_2)&=-\frac{3 r^5+15 r^4+6 r^3-42 r^2+31 r-13}{240 (r+1)},\\
V(P; \gd_2)&=-\frac{3 r^7+15 r^6+60 r^5+66 r^4+18 r^3-324 r^2+283 r-121}{2184 (r+1)}.
 \end{align*}
After some calculation, we observe that
$V(P; \gb_2)<V(P; \gg_2)$ is true  if $0.08502712839\leq r<0.1622776602$; $V(P; \gb_2)=V(P; \gg_2)$ if $r=0.1622776602$, and $V(P; \gb_2)>  V(P; \gg_2)$ if $0.1622776602< r\leq 0.2472080177$. Again, $V(P; \gb_2)>V(P; \gd_2)$ if $0.1701473031 < r\leq 0.2679491924$ and $V(P; \gb_2)=V(P; \gd_2)$ if $r=0.1701473031$. Recall that the sets $\gb_n$ form CVTs if $0<r\leq 0.2679491924$. Hence, we can say that the sets $\gb_n$ do not form the optimal sets of $n$-means if $0.1622776602<r\leq 0.2679491924$. In Theorem~\ref{Th2},  we have seen that the sets $\gb_n$ form the optimal sets of $n$-means if $r=\frac 1{25}$. Using the similar technique, we can show that the sets $\gb_n$ form the optimal sets of $n$-means if $0<r\leq \frac 1{25}$. Since $V(P; \gb_2)=V(P; \gg_2)$ if $r=r_0$; and by Theorem~\ref{Th2}, the sets $\gg_n$ form the optimal sets of $n$-means if $r=r_0$, we can say that the sets $\gb_n$ also form the optimal sets of $n$-means if $r=r_0$. Again, $V(P; \gb_2)$ is strictly decreasing in the closed interval $[0, r_0]$. Hence, the sets $\gb_n$ form the optimal sets of $n$-means for $0<r\leq r_0$.

To prove the remaining part of the theorem, we see that

$(i)$ $V(P; \gb_2)<V(P; \gg_2)$  if $0.08502712839\leq r<0.1622776602$; $V(P; \gb_2)=V(P; \gg_2)$ if $r=0.1622776602$, and $V(P; \gb_2)>  V(P; \gg_2)$ if $0.1622776602< r\leq 0.2472080177$.

$(ii)$ $V(P; \gd_2)<V(P; \gg_2)$ if  $0.2317626315<r\leq 0.2472080177$; $V(P; \gd_2)=V(P; \gg_2)$ if $r=0.2317626315$, and $V(P; \gd_2)>  V(P; \gg_2)$ if $0.1845020699\leq r< 0.2317626315$.

Thus, the sets $\gg_n$ do not form the optimal sets of $n$-means if $0.08502712839\leq r<0.1622776602$, or if $0.2317626315<r\leq 0.2472080177$; in other words, the range of $r$ values for which the sets $\gg_n$ form the optimal sets of $n$-means is bounded below by $r_0=0.1622776602$ and bounded above by $r_1=0.2317626315$.
By Theorem~\ref{Th2}, we see that the sets $\gg_n$ form the optimal sets of $n$-means if $r=r_0$, and $r=r_1$. Again, $V(P; \gg_2)$ is strictly decreasing in the closed interval $[r_0, r_1]$. Hence, the precise range of $r$ values  for which the sets $\gg_n$ form the optimal sets of $n$-means is given by $r_0\leq r\leq r_1$. Thus, the proof of the theorem is complete.
\end{proof}
Since the Cantor set $C$ under investigation satisfies the strong separation condition, with each $S_j$ having contracting factor of $r$, the Hausdorff dimension of the Cantor set is equal to the similarity dimension.  Hence, from the equation $3(r)^\gb=1$,  we have $\dim_{\te{H}}(C)=\beta =-\frac {\log 3}{\log r}$.  By Theorem~14.17 in \cite{GL1}, the quantization dimension $D (P) $ exists and is equal to $\gb$. In Theorem~\ref{Th4}, we show that $\gb$ dimensional quantization coefficient for $P$ does not exist.

\begin{theorem} \label{Th4}
The $\gb$-dimensional quantization coefficient for $0<r\leq r_1$ does not exist.
\end{theorem}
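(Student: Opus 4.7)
The plan is to show that $n^{2/\gb}V_n$ admits two distinct subsequential limits for every $r\in(0,r_1]$, which rules out the existence of the quantization coefficient. Throughout, let $\ga_2$ denote $\gb_2$ when $0<r\leq r_0$ and $\gg_2$ when $r_0\leq r\leq r_1$, so that by Theorem~\ref{Th3} the optimal error $V_n$ is recorded in Proposition~\ref{prop002} or Proposition~\ref{prop004} as appropriate; the two formulas coincide at $r=r_0$, so the endpoint is handled by either.

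The first step is to evaluate the closed forms along the subsequences $n=3^k$ and $n=2\cdot 3^k$. In each case $\ell(n)=k$, and the index set $I$ appearing in Definition~\ref{defi00} or Definition~\ref{defi23} is either empty or equals all of $\set{1,2,3}^{k}$, yielding
\[
V_{3^{k}}=r^{2k}V,\qquad V_{2\cdot 3^{k}}=r^{2k}V(P;\ga_2).
\]
Since $\gb$ is the similarity dimension, $3r^{\gb}=1$, equivalently $3^{2/\gb}r^{2}=1$, and therefore
\[
(3^k)^{2/\gb}V_{3^k}=V,\qquad (2\cdot 3^k)^{2/\gb}V_{2\cdot 3^k}=2^{2/\gb}V(P;\ga_2).
\]
Both expressions are constant in $k$, so they are already the two subsequential limits.

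It remains to prove $V\neq 2^{2/\gb}V(P;\ga_2)$ on the whole parameter range. Starting from the formulas for $V(P;\gb_2)$ and $V(P;\gg_2)$ recorded just before Theorem~\ref{Th3} and factoring $(1-r)$ out of each numerator, I would reduce to
\[
\frac{V(P;\gb_2)}{V}=\frac{1+3r^2}{4},\qquad \frac{V(P;\gg_2)}{V}=\frac{3r^4+18r^3+24r^2-18r+13}{40}.
\]
Writing $2^{-2/\gb}=r^{\log_{3}4}$ and noting this is strictly increasing in $r$ with value $1/4$ at $r=1/3$ gives $2^{-2/\gb}<1/4$ on the whole interval $(0,1/3)$. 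The bound $V(P;\gb_2)/V>1/4$ is immediate for $r>0$, settling $0<r\leq r_0$. For $r_0\leq r\leq r_1$ the inequality $V(P;\gg_2)/V>1/4$ reduces to
\[
3r^4+18r^3+24r^2-18r+3>0,
\]
which I would verify by a short derivative analysis on $[r_0,r_1]$, checking positivity at the unique interior critical point and at the endpoints. In each case this gives $V(P;\ga_2)/V>1/4>2^{-2/\gb}$, so $2^{2/\gb}V(P;\ga_2)>V$, the two subsequential limits are distinct, and $\lim_{n\to\infty}n^{2/\gb}V_n$ does not exist.

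The main obstacle is the polynomial inequality $3r^4+18r^3+24r^2-18r+3>0$ on $[r_0,r_1]$: the negative linear term $-18r$ is not dominated in an obvious way, and $r_0,r_1$ are only specified to ten-decimal precision, so care is needed to avoid tacitly relying on a numerical approximation. Otherwise the argument is bookkeeping with the formulas of Propositions~\ref{prop002} and~\ref{prop004} together with the identity $3r^{\gb}=1$.
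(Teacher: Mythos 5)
Your argument is correct and follows the same route as the paper: both extract the subsequences $n=3^{k}$ and $n=2\cdot 3^{k}$, use $3^{2/\gb}r^{2}=1$ to reduce them to the constant values $V$ and $2^{2/\gb}V(P;\ga_2)$, and conclude that $n^{2/\gb}V_n$ has no limit. The one place you go beyond the paper is in actually proving these two values are distinct --- the paper asserts this without verification --- and the polynomial inequality you flag as the main obstacle is in fact easy: on $[r_0,r_1]$ one has $r<\tfrac14$, so $24r^2-18r+3=3(2r-1)(4r-1)>0$ while $3r^4+18r^3>0$, giving $3r^4+18r^3+24r^2-18r+3>0$ with no reliance on the ten-digit approximations of $r_0,r_1$.
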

\begin{proof} We have $3^{\frac 1 \gb}=\frac 1 r$. Notice that $\Big\{\Big(3^{\ell(n)}\Big)^{\frac 2 {\gb}}V_{3^{\ell(n)}}(P)\Big\}$ and $\Big\{\Big(2\cdot 3^{\ell(n)}\Big)^{\frac 2 {\gb}}V_{2\cdot 3^{\ell(n)}}(P)\Big\}$ are two different subsequences of the sequence $\Big\{n^{\frac 2 {\gb}}V_{n}(P)\Big\}$. First, assume that $0<r\leq r_0$. Then, by Theorem~\ref{Th3}, $\gb_n$ is an optimal set of $n$-means for $0<r\leq r_0$. Recall Proposition~\ref{prop002}. Then, we have
\begin{equation} \label{eq00}
\lim_{n\to \infty} \Big(3^{\ell(n)}\Big)^{\frac 2 {\gb}}V_{3^{\ell(n)}}(P)=\lim_{n\to \infty} \frac 1{r^{2\ell(n)}} \frac 1 {3^{\ell(n)}} r^{2\ell(n)} 3^{\ell(n)}V=V,
\end{equation}
and \begin{equation} \label{eq01}
\lim_{n\to \infty} \Big(2\cdot 3^{\ell(n)}\Big)^{\frac 2 {\gb}}V_{2\cdot 3^{\ell(n)}}(P)=\lim_{n\to \infty} 2^{\frac 2 \gb}\frac 1{r^{2\ell(n)}} \frac 1 {3^{\ell(n)}} r^{2\ell(n)} 3^{\ell(n)}V(P; \gb_2)=2^{\frac 2 \gb} V(P; \gb_2).
\end{equation}
By \eqref{eq00} and \eqref{eq01}, we see that $\Big\{n^{\frac 2 {\gb}}V_{n}(P)\Big\}$ has two different subsequences having two different limits, and so $\lim_{n\to \infty} n^{\frac 2 \gb} V_n(P)$ does not exist. Due to Theorem~\ref{Th3}, and Proposition~\ref{prop004}, similarly, we can show that if $r_0\leq r\leq r_1$, then $\lim_{n\to \infty} n^{\frac 2 \gb} V_n(P)$ does not exist. Thus, we show that the $\gb$-dimensional quantization coefficient for $0<r\leq r_1$ does not exist, which completes the proof of the theorem.
\end{proof}

\subsection*{Acknowledgement} The author would like to express his sincere gratitude to the referees for their valuable comments.

\end{document}